\newcommand{\sg}{\textnormal{sg}}
\newtheorem{theorem}{Theorem}
\newtheorem{lemma}[theorem]{Lemma}
\newtheorem{proposition}[theorem]{Proposition}
\theoremstyle{remark}
\theoremstyle{definition}
\numberwithin{theorem}{section} \numberwithin{equation}{section}
\numberwithin{example}{section}
\title{Hecke-Rogers double-sums and false theta functions}
\author{Eric T. Mortenson}
\begin{document}

\date{20 October 2020}

\subjclass[2010]{11B65, 11F27}

\keywords{Hecke-Rogers double-sums, false theta functions, mock theta functions}

\begin{abstract}
We develop a setting in which one can evaluate certain Hecke-Rogers series in terms of false theta functions.  We apply our setting to recent false theta function identities of Chan and Kim as well as Andrews and Warnaar.
\end{abstract}

\address{Saint Petersburg State University, Department of Mathematics and Computer Science, Saint Petersburg, 199178, Russia}
\email{etmortenson@gmail.com}
\maketitle
\setcounter{section}{-1}


\section{Notation}\label{section:double-type-II}

Let $q$ be a complex number, $0<|q|<1$, and define $\mathbb{C}^*:=\mathbb{C}-\{0\}$.  We recall:
\begin{gather*}
(x)_n=(x;q)_n:=\prod_{i=0}^{n-1}(1-q^ix), \ \ (x)_{\infty}=(x;q)_{\infty}:=\prod_{i= 0}^{\infty}(1-q^ix),\\
 j(x;q):=(x)_{\infty}(q/x)_{\infty}(q)_{\infty}=\sum_{n=-\infty}^{\infty}(-1)^nq^{\binom{n}{2}}x^n.
\end{gather*}
where in the last line the equivalence of product and sum follows from Jacobi's triple product identity.    The following are special cases of the above definition.  Let $a$ and $m$ be integers with $m$ positive.  Define
\begin{gather*}
J_{a,m}:=j(q^a;q^m), \ \ \overline{J}_{a,m}:=j(-q^a;q^m), \ {\text{and }}J_m:=J_{m,3m}=\prod_{i= 1}^{\infty}(1-q^{mi}).
\end{gather*}
We will use the following definition of an Appell-Lerch sum.   Let $x,z\in\mathbb{C}^*$ with neither $z$ nor $xz$ an integral power of $q$. Then
\begin{equation}
m(x,q,z):=\frac{1}{j(z;q)}\sum_{r=-\infty}^{\infty}\frac{(-1)^rq^{\binom{r}{2}}z^r}{1-q^{r-1}xz}.
\label{equation:mxqz-def}
\end{equation}

\section{Introduction}

In previous work with Hickerson \cite{HM}, we found a formula that expands Hecke-Rogers double-sums in terms of theta functions and Appell-Lerch functions.  The expansions yield classical modular and mock modular Hecke-Rogers double-sum identities as straightforward consequences.  In particular, our double-sum expansions \cite{HM} give a new proof of the mock theta conjectures \cite{H1, H2}.  In our present work, we obtain a new formula that expands double-sums of a different symmetry type in terms of false theta functions, and use the expansion to give new proofs of recent false theta function identities of Chan and Kim \cite{CK}.  We will also demonstrate how our setting applies to the false theta function identities of Andrews and Warnaar \cite{AW}.  We begin by reviewing the results of \cite{HM}.

We begin with Hecke-Rogers series.  Let $x,y\in\mathbb{C}^*$, then
\begin{equation}
f_{a,b,c}(x,y,q):=\Big ( \sum_{r,s\ge 0}-\sum_{r,s<0}\Big ) (-1)^{r+s}x^ry^sq^{a\binom{r}{2}+brs+c\binom{s}{2}},\label{equation:fabc-def}
\end{equation}
where $a,c>0$ and $b>-\sqrt{ac}$.  Hecke \cite{He} conducted the first systematic study of such double-sums, but special cases appeared earlier in work of Rogers \cite{R}.

By using a heuristic \cite[Section $3$]{HM} that relates  Appell--Lerch functions (\ref{equation:mxqz-def}) to divergent partial theta functions, we were led to expressions such as
\begin{align}
f_{1,2,1}(x,y,q)&=j(y;q)m\big (\frac{q^2x}{y^2},q^3,-1\big )+j(x;q)m\big (\frac{q^2y}{x^2},q^3,-1\big )
\label{equation:f121}\\
&\ \ \ \ \ - \frac{yJ_3^3j(-x/y;q)j(q^2xy;q^3)}
{\overline{J}_{0,3}j(-qy^2/x;q^3)j(-qx^2/y;q^3)},\notag
\end{align}
which has the immediate corollaries:
\begin{align}
f_{1,2,1}(q,q,q)&=J_{1}^2,\label{equation:f121-ex1}\\
f_{1,2,1}(q,-q,q)&=\overline{J}_{1,4}\phi(q),\label{equation:f121-ex2}
\end{align}
where
\begin{equation}
\phi(q):=\sum_{n=0}^{\infty}\frac{(-1)^nq^{n^2}(q;q^2)_{n}}{(-q)_{2n}}=2m(q,q^3,-1)
\end{equation}
is a sixth order mock theta function \cite{AH}.   We point out that (\ref{equation:f121}) is a special case of a more general expansion \cite[Theorem 1.3]{HM}.

In this note, we explore Hecke-Rogers double-sums of a different symmetry type; in particular we will study sums of the form
\begin{equation}
g_{a,b,c}(x,y,q):=\Big ( \sum_{r,s\ge 0}+\sum_{r,s<0}\Big )(-1)^{r+s}x^ry^sq^{a\binom{r}{2}+brs+c\binom{s}{2}},
\label{equation:gabc-def}
\end{equation}
where $a,c>0$, $b>-\sqrt{ac}$.  Well-known functions have the form (\ref{equation:gabc-def}).  For example, we have the function studied  Andrews, Dyson, and Hickerson  \cite{ADH} and Cohen \cite{C}
\begin{equation}
\sigma(q)
:=1+\sum_{n=1}^{\infty}\frac{q^{n(n+1)/2}}{(-q;q)_n}
=g_{1,3,3}(-q,q^2,q)-qg_{1,3,3}(-q^3,q^4,q),\label{equation:ADH-g131}
\end{equation}
as well as recent recent results of Chan and Kim on false theta functions \cite{CK}.

We will give new proofs of several false theta function identities found in Chan and Kim \cite{CK}.  In particular we will prove Warnaar's \cite{CK, W1}
\begin{equation}
g_{1,2,2}(q,-q^3,q)=1+2\sum_{n=1}^{\infty}(-1)^nq^{n(n+1)/2},\label{equation:Warnaar}
\end{equation}
 as well as Chan and Kim's \cite{CK}
\begin{align}
g_{2,2,2}(-q^2,-q^3,q)&=\frac{1}{1-q},\label{equation:Prop3pt4A}\\
g_{2,2,2}(q^2,-q^3,q)&=\frac{1}{1+q}\Big ( 1+2\sum_{n= 1}^{\infty}(-1)^nq^{n(n+1)}\Big ),\label{equation:Prop3pt4B}\\
g_{1,2,4}(-q,-q^4,q)&=\frac{1}{1-q},\label{equation:Thm1pt1A}\\
g_{1,2,4}(q,-q^4,q)&=\frac{1}{1-q}\Big (-1+2\sum_{n=0}^{\infty}(-1)^nq^{n(n+1)/2}\Big ),\label{equation:Thm1pt1B}
\end{align}
where Identities (\ref{equation:Prop3pt4A}) and (\ref{equation:Prop3pt4B}) are from \cite[Proposition $3.4$]{CK}, and Identities (\ref{equation:Thm1pt1A}) and (\ref{equation:Thm1pt1A}) are from \cite[Theorem $1.1$]{CK}.  Identity (\ref{equation:Thm1pt1B}) has been slightly rewritten.  We also prove \cite[Theorem $3.6$]{CK} 
\begin{equation}
q^4G(q^3)=q-\sum_{n=1}^{\infty}\Big ( \frac{n}{3}\Big )q^{n^2}-2\sum_{n=0}^{\infty}q^{9n^2+15n+7}(1-q^{6n+6}), \label{equation:Thm3pt6}
\end{equation}
where $(\tfrac{n}{3})$ is the Legendre Symbol and
\begin{equation}
G(q):=g_{6,3,2}(-q^5,-q^3,q)-qg_{6,3,2}(-q^7,-q^4,q).
\end{equation}

In \cite{AW}, Andrews and Warnaar prove several false theta function identities using the Bailey Transform.   In the course of their proofs, they write many of the $q$-hypergeometric series in terms of (\ref{equation:gabc-def}); however, this is not stated explicitly.  For example, in the course of proving \cite[(1.1a), (1.1c), (1.1d)]{AW}
\begin{align}
\sum_{n=0}^{\infty}\frac{(-1)^nq^{n(n+1)}(q;q^2)_{n}}{(-q;q)_{2n+1}}
&=\sum_{n=0}^{\infty}(-1)^nq^{n(n+1)/2},\label{equation:AW-1pt1a}\\
\sum_{n=0}^{\infty}\frac{(q;q^2)_{n}q^n}{(-q;q)_{2n+1}}
&=\sum_{n=0}^{\infty}(-1)^nq^{3n(n+1)/2},\label{equation:AW-1pt1c}\\
\sum_{n=0}^{\infty}\frac{(q;-q)_{2n}q^n}{(-q;q)_{2n+1}}
&=\sum_{n=0}^{\infty}(-1)^nq^{2n(n+1)},\label{equation:AW-1pt1d}
\end{align}
they prove as intermediate steps
\begin{align}
\sum_{n=0}^{\infty}\frac{(-1)^nq^{n(n+1)}(q;q^2)_{n}}{(-q;q)_{2n+1}}
&=g_{1,3,1}(q,q^2,q),\label{equation:AW-1pt1a-g131}\\
\sum_{n=0}^{\infty}\frac{(q;q^2)_{n}q^n}{(-q;q)_{2n+1}}
&=g_{3,1,3}(q^2,q^3,q),\label{equation:AW-1pt1c-g313}\\
\sum_{n=0}^{\infty}\frac{(q;-q)_{2n}q^n}{(-q;q)_{2n+1}}
&=g_{1,0,1}(q^2,q^4,q^4).\label{equation:AW-1pt1d-g404}
\end{align}
We will also show how the Hecke-Rogers series found in \cite{AW} fit within the context of this note.

In Section \ref{section:prelim}, we prove properties for (\ref{equation:gabc-def}) that are analogous to  those for (\ref{equation:fabc-def}).  The new properties will be what proves the false theta function identities.  The reader may have noticed that although the examples (\ref{equation:f121-ex1}) and (\ref{equation:f121-ex2}) of (\ref{equation:f121}) are weight one, the series (\ref{equation:gabc-def}) in this note usually evaluate to weight one-half false theta functions.  In Section \ref{section:remarks}, we demonstrate how to obtain weight one-half objects  from (\ref{equation:fabc-def}).

In Sections \ref{section:chanKim} and \ref{section:andrewsWarnaar}, we give new proofs of false theta function identities found in \cite{CK} and \cite{AW} respectively.  In Section \ref{section:signFlips}, we give examples in which sign flips in expressions involving (\ref{equation:fabc-def}) and (\ref{equation:gabc-def}) result in false theta functions.   In particular, we will show a simple sign flip takes the function studied by Andrews, Dyson, and Hickerson (\ref{equation:ADH-g131}) to a false theta function
\begin{equation}
g_{1,3,3}(-q,q^2,q)+qg_{1,3,3}(-q^3,q^4,q)
=\sum_{s=-\infty}^{\infty}\sg(s)(-1)^sq^{s(3s+1)/2}.
\end{equation}

\section{Preliminaries} \label{section:prelim}
In this section we obtain $g_{a,b,c}(x,y,q)$ analogs of $f_{a,b,c}(x,y,q)$ properties.   We recall some basic facts about Hecke-Rogers series.   We have \cite[$(1.14)$]{H2}
\begin{equation}
\sum_{\sg(r)=\sg(s)}\sg(r)c_{r,s}
=-\sum_{\sg(r)=\sg(s)}\sg(r)c_{-1-r,-1-s},\label{equation:H2eq1.14}
\end{equation}
and \cite[$(1.15)$]{H2}
\begin{equation}
\sum_{\sg(r)=\sg(s)}\sg(r)c_{r,s}= \sum_{\sg(r)=\sg(s)}\sg(r)c_{r+\ell,s+k}+\sum_{r=0}^{\ell-1}\sum_{s\in \mathbb{Z}}c_{r,s}+\sum_{s=0}^{k-1}\sum_{r\in \mathbb{Z}}c_{r,s},\label{equation:H2eq1.15}
\end{equation}
where 
\begin{equation}
\sg(r):=
\begin{cases}
1 & r\ge 0,\\
-1 & r<0.
\end{cases}
\end{equation}

When $b<a$ we adopt the convention that 
\begin{equation}
\sum_{r=a}^{b} c_r:=-\sum_{r=b+1}^{a-1} c_r,\label{equation:sumconvention}
\end{equation}
which has the useful consequence
\begin{equation}
\sum_{r=0}^{-1}c_r=-\sum_{r=0}^{-1}c_r=0.
\end{equation}

Setting 
\begin{equation}
c_{r,s}:=(-1)^{r+s}x^ry^sq^{a\binom{r}{2}+brs+c\binom{s}{2}}
\end{equation}
 in (\ref{equation:H2eq1.14}) and (\ref{equation:H2eq1.15}) results in the following two propositions:
 \begin{proposition} \cite[Proposition 6.2]{HM} For $x,y\in\mathbb{C}^*$
\begin{equation}
f_{a,b,c}(x,y,q)=-\frac{q^{a+b+c}}{xy}f_{a,b,c}(q^{2a+b}/x,q^{2c+b}/y,q).\label{equation:f-flip}
\end{equation}
\end{proposition}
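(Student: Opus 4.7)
The plan is to recognize that the defining double-sum for $f_{a,b,c}(x,y,q)$ is exactly the type of sum to which the ``$r\mapsto -1-r$'' reflection identity (\ref{equation:H2eq1.14}) applies, and then to compute what substitution this reflection induces on the arguments $x,y$.

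First I would observe that if we set $c_{r,s}:=(-1)^{r+s}x^ry^sq^{a\binom{r}{2}+brs+c\binom{s}{2}}$, then
\[
f_{a,b,c}(x,y,q)=\sum_{\sg(r)=\sg(s)}\sg(r)\,c_{r,s},
\]
because when $r,s\ge 0$ the weight $\sg(r)$ equals $+1$ and when $r,s<0$ it equals $-1$, exactly matching the $+/-$ split in (\ref{equation:fabc-def}). Hickerson's identity (\ref{equation:H2eq1.14}) then gives
\[
f_{a,b,c}(x,y,q)=-\sum_{\sg(r)=\sg(s)}\sg(r)\,c_{-1-r,-1-s}.
\]

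Next I would compute $c_{-1-r,-1-s}$ explicitly. The key arithmetic is that $\binom{-1-r}{2}=\binom{r+2}{2}=\binom{r}{2}+2r+1$, and similarly for $s$, while $(-1-r)(-1-s)=rs+r+s+1$. Collecting the exponent of $q$ gives
\[
a\binom{r}{2}+brs+c\binom{s}{2}+(a+b+c)+(2a+b)r+(2c+b)s,
\]
and the sign $(-1)^{-1-r-1-s}=(-1)^{r+s}$ is unchanged. Factoring out the constant $q^{a+b+c}/(xy)$ and absorbing $q^{(2a+b)r}$ into $x^{-r}$ and $q^{(2c+b)s}$ into $y^{-s}$, I would identify
\[
c_{-1-r,-1-s}(x,y,q)=\frac{q^{a+b+c}}{xy}\,c_{r,s}\!\left(\frac{q^{2a+b}}{x},\frac{q^{2c+b}}{y},q\right).
\]

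Substituting back into the reflected sum and pulling the constant outside, the remaining sum is precisely $f_{a,b,c}(q^{2a+b}/x,\,q^{2c+b}/y,\,q)$, which yields the claimed identity. The main obstacle, such as it is, is purely bookkeeping: carefully verifying the binomial-coefficient shift $\binom{-1-r}{2}=\binom{r}{2}+2r+1$ and tracking how the extra factors $q^{(2a+b)r}$, $q^{(2c+b)s}$, $x^{-1}$, $y^{-1}$ combine to realize the substitution $x\mapsto q^{2a+b}/x$, $y\mapsto q^{2c+b}/y$; no convergence or manipulation issue arises since (\ref{equation:H2eq1.14}) is formal and purely combinatorial.
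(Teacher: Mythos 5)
Your proposal is correct and follows essentially the same route as the paper, which obtains the identity precisely by substituting $c_{r,s}=(-1)^{r+s}x^ry^sq^{a\binom{r}{2}+brs+c\binom{s}{2}}$ into the reflection identity (\ref{equation:H2eq1.14}); your exponent bookkeeping ($\binom{-1-r}{2}=\binom{r}{2}+2r+1$, etc.) checks out. The paper leaves this verification implicit, so you have simply supplied the details it omits.
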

\begin{proposition}\cite[Proposition 6.3]{HM}  \label{proposition:f-functionaleqn} For $x,y\in\mathbb{C}^*$ and $\ell, k \in \mathbb{Z}$
\begin{align}
f_{a,b,c}(x,y,q)&=(-x)^{\ell}(-y)^kq^{a\binom{\ell}{2}+b\ell k+c\binom{k}{2}}f_{a,b,c}(q^{a\ell+bk}x,q^{b\ell+ck}y,q) \label{equation:f-shift}\\
&\ \ \ \ +\sum_{m=0}^{\ell-1}(-x)^mq^{a\binom{m}{2}}j(q^{mb}y;q^c)+\sum_{m=0}^{k-1}(-y)^mq^{c\binom{m}{2}}j(q^{mb}x;q^a).\notag
\end{align}
\end{proposition}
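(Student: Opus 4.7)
The plan is to substitute the explicit coefficients $c_{r,s}:=(-1)^{r+s}x^ry^sq^{a\binom{r}{2}+brs+c\binom{s}{2}}$ into Hickerson's identity (\ref{equation:H2eq1.15}) and recognize each of the three resulting sums. First, I would note that with this choice of $c_{r,s}$, the left-hand side $\sum_{\sg(r)=\sg(s)}\sg(r)c_{r,s}$ is precisely $f_{a,b,c}(x,y,q)$, since the contribution with $\sg(r)=\sg(s)=1$ is the sum over $r,s\ge 0$ and the contribution with $\sg(r)=\sg(s)=-1$ comes with a minus sign, matching the definition (\ref{equation:fabc-def}).

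Next, I would handle the shifted sum on the right. Using the binomial identities
\begin{equation*}
\binom{r+\ell}{2}=\binom{r}{2}+r\ell+\binom{\ell}{2}, \qquad \binom{s+k}{2}=\binom{s}{2}+sk+\binom{k}{2},
\end{equation*}
a direct expansion of $(r+\ell)(s+k)=rs+rk+\ell s+\ell k$ gives
\begin{equation*}
c_{r+\ell,s+k}=(-x)^{\ell}(-y)^k q^{a\binom{\ell}{2}+b\ell k+c\binom{k}{2}}\,\tilde{c}_{r,s},
\end{equation*}
where $\tilde{c}_{r,s}$ is the same coefficient but with $x,y$ replaced by $xq^{a\ell+bk}$ and $yq^{b\ell+ck}$. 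Pulling the scalar factor outside the sum identifies $\sum_{\sg(r)=\sg(s)}\sg(r)c_{r+\ell,s+k}$ as $(-x)^{\ell}(-y)^k q^{a\binom{\ell}{2}+b\ell k+c\binom{k}{2}}f_{a,b,c}(q^{a\ell+bk}x,q^{b\ell+ck}y,q)$, matching the first term on the right of (\ref{equation:f-shift}).

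For the two boundary sums, I would factor the inner geometric piece and apply Jacobi's triple product. For instance,
\begin{equation*}
\sum_{r=0}^{\ell-1}\sum_{s\in\mathbb{Z}}c_{r,s}=\sum_{r=0}^{\ell-1}(-x)^r q^{a\binom{r}{2}}\sum_{s\in\mathbb{Z}}(-1)^s(yq^{br})^s q^{c\binom{s}{2}}=\sum_{m=0}^{\ell-1}(-x)^m q^{a\binom{m}{2}}j(q^{mb}y;q^c),
\end{equation*}
using the series definition of $j(\,\cdot\,;q^c)$ recorded in Section 0. The same calculation with the roles of $r$ and $s$ swapped produces the final term of (\ref{equation:f-shift}).

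The only subtlety is book-keeping when $\ell$ or $k$ is negative; here the summation convention (\ref{equation:sumconvention}) absorbs the sign and guarantees that the boundary sums come out with the correct orientation, so the identity holds for all integers $\ell,k$ rather than just nonnegative ones. I do not expect a genuine obstacle: the proof is essentially an exercise in shifting indices and unwinding the definitions, and the only thing to be careful with is the quadratic-exponent algebra when expanding $a\binom{r+\ell}{2}+b(r+\ell)(s+k)+c\binom{s+k}{2}$.
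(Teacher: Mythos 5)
Your proposal is correct and follows exactly the route the paper indicates: the paper derives this proposition by substituting $c_{r,s}=(-1)^{r+s}x^ry^sq^{a\binom{r}{2}+brs+c\binom{s}{2}}$ into Hickerson's identity (\ref{equation:H2eq1.15}), identifying the shifted sum via the binomial expansion of the exponent and the boundary sums via the series form of $j(\cdot;\cdot)$, with the convention (\ref{equation:sumconvention}) handling negative $\ell,k$. You have simply written out the details that the paper leaves implicit.
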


We now state two analogous identities which appear to be new.
\begin{proposition} \label{proposition:g-shift} We have
\begin{align}
\sum_{\sg(r)=\sg(s)}c_{r,s}
&= \sum_{\sg(r)=\sg(s)} c_{-1-r,-1-s},\label{equation:H2eq1.14g}\\
\sum_{\sg(r)=\sg(s)}c_{r,s}
&= \sum_{\sg(r)=\sg(s)}c_{r+\ell,s+k}
\label{equation:H2eq1.15g}\\
&\ \ \ \ \ +\sum_{r=0}^{\ell-1}\sum_{s\in \mathbb{Z}}\sg(s)c_{r,s}
+\sum_{s=0}^{k-1}\sum_{r\in \mathbb{Z}}\sg(r)c_{r,s} -2\sum_{r=0}^{\ell-1}\sum_{s=0}^{k-1}c_{r,s}.
\notag
\end{align}
\end{proposition}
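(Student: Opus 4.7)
The plan is to prove both identities by elementary combinatorial manipulations on $\Z^2$, mirroring Hickerson's approach to the $f$-version identities (\ref{equation:H2eq1.14}) and (\ref{equation:H2eq1.15}).

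For (\ref{equation:H2eq1.14g}), the proof is a one-line change of variables. The involution $(r,s)\mapsto(-1-r,-1-s)$ flips $\sg$ on each coordinate, interchanging the quadrants $\{r,s\ge 0\}$ and $\{r,s<0\}$, and therefore preserves the set $\{\sg(r)=\sg(s)\}$. Substituting this into the right-hand side recovers the left. The only difference from (\ref{equation:H2eq1.14}) is that no $\sg(r)$ factor sits in the summand here, so no compensating sign appears and the $+$ replaces the $-$.

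For (\ref{equation:H2eq1.15g}), the plan is to reduce to two one-step identities and iterate. First I establish the base case
\[
\sum_{\sg(r)=\sg(s)}c_{r,s}=\sum_{\sg(r)=\sg(s)}c_{r+1,s}+\sum_{s\in\Z}\sg(s)c_{0,s}
\]
by writing each double sum as $\sum_{r,s\ge 0}+\sum_{r,s<0}$ and noticing that the two index sets differ only on the column $r=0$: the $s\ge 0$ entries there occur only on the left, while the $s<0$ entries occur only on the right. The mirror identity for the $(0,1)$-shift is proved identically. Iterating the first $\ell$ times, then the mirror $k$ times on the resulting sum $\sum_{\sg(r)=\sg(s)}c_{r+\ell,s}$, produces the shift by $(\ell,k)$ together with two strip corrections; the second correction initially takes the form $\sum_{s=0}^{k-1}\sum_{r\in\Z}\sg(r)c_{r+\ell,s}$, which after the reindexing $r\mapsto r-\ell$ carries the weight $\sg(r-\ell)$ rather than $\sg(r)$.

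To finish, I convert $\sg(r-\ell)$ back to $\sg(r)$ via the elementary identity $\sg(r)-\sg(r-\ell)=2$ for $0\le r<\ell$ and $0$ otherwise (valid for $\ell\ge 0$), and this is exactly what introduces the $-2\sum_{r=0}^{\ell-1}\sum_{s=0}^{k-1}c_{r,s}$ correction in (\ref{equation:H2eq1.15g}). Negative values of $\ell$ or $k$ are handled by iterating the analogous backward one-step identity $\sum_{\sg(r)=\sg(s)}c_{r,s}=\sum_{\sg(r)=\sg(s)}c_{r-1,s}-\sum_{s\in\Z}\sg(s)c_{-1,s}$, or equivalently by invoking the summation convention (\ref{equation:sumconvention}); either route flips the signs and index ranges of the strip and corner sums consistently. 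The main obstacle is bookkeeping at the $\sg$-conversion step, where one must track precisely which sign function is attached to which summand so that all three correction terms on the right-hand side emerge with exactly the signs and ranges displayed.
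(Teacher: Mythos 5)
Your proposal is correct, but it takes a genuinely different route from the paper's for the second identity (for (\ref{equation:H2eq1.14g}) the two arguments coincide: the involution $(r,s)\mapsto(-1-r,-1-s)$ with no compensating sign). For (\ref{equation:H2eq1.15g}), the paper computes the difference $\sum_{\sg(r)=\sg(s)}c_{r,s}-\sum_{\sg(r)=\sg(s)}c_{r+\ell,s+k}$ in a single pass: it rewrites the shifted sum as a sum over the translated quadrants and decomposes the resulting signed combination of four quadrant sums directly into the two strip sums and the corner sum, with the convention (\ref{equation:sumconvention}) absorbing all sign cases of $\ell$ and $k$ at once; there the term $-2\sum_{r=0}^{\ell-1}\sum_{s=0}^{k-1}c_{r,s}$ emerges from the inclusion-exclusion of the two overlapping strips. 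You instead telescope $\ell+k$ unit shifts and then repair the displaced weight via $\sg(r)-\sg(r-\ell)=2$ on $0\le r<\ell$, which is where your corner term comes from --- a different mechanism producing the same correction. Your one-step identity, the iteration, and the $\sg$-conversion all check out, including for negative $\ell$, where $\sg(r-\ell)-\sg(r)=2$ on $\ell\le r<0$ combines with (\ref{equation:sumconvention}) to give the same closed formula. What your route buys is an inductive argument with a transparent base case; what the paper's buys is uniformity --- no case split on the signs of $\ell,k$ and no separate step to restore the weight $\sg(r)$. The one point to handle carefully in a full writeup of your version is exactly the one you flag: the $s$-strip correction is generated at the already-shifted argument $c_{r+\ell,s}$, so the reindexing and $\sg$-conversion must be applied to that term only.
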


Setting 
\begin{equation}
c_{r,s}:=(-1)^{r+s}x^ry^sq^{a\binom{r}{2}+brs+c\binom{s}{2}}
\end{equation}
in (\ref{equation:H2eq1.14g}) and (\ref{equation:H2eq1.15g}) results in the following two propositions:
\begin{proposition}\label{proposition:g-flip} For $x,y\in\mathbb{C}^{*}$
\begin{equation}
g_{a,b,c}(x,y,q)=\frac{q^{a+b+c}}{xy}g_{a,b,c}(q^{2a+b}/x,q^{2c+b}/y,q).\label{equation:g-flip}
\end{equation}
\end{proposition}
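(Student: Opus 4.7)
The plan is to apply identity (\ref{equation:H2eq1.14g}) directly to the specific choice
\[
c_{r,s}=(-1)^{r+s}x^{r}y^{s}q^{a\binom{r}{2}+brs+c\binom{s}{2}},
\]
since by definition $g_{a,b,c}(x,y,q)=\sum_{\sg(r)=\sg(s)}c_{r,s}$. The entire task then reduces to computing $c_{-1-r,-1-s}$ and identifying the result as $\frac{q^{a+b+c}}{xy}$ times the $(r,s)$-term of $g_{a,b,c}(q^{2a+b}/x,q^{2c+b}/y,q)$. This mirrors the treatment of the analogous flip (\ref{equation:f-flip}) for $f_{a,b,c}$, the only substantive difference being the sign pattern: in (\ref{equation:H2eq1.14g}) there is no factor of $\sg(r)$, which accounts for the flip identity for $g_{a,b,c}$ having a $+$ sign in front of $q^{a+b+c}/(xy)$ instead of the $-$ sign appearing in (\ref{equation:f-flip}).

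The key arithmetic step is the binomial manipulation
\[
\binom{-1-r}{2}=\frac{(r+1)(r+2)}{2}=\binom{r}{2}+2r+1,
\]
and analogously for $s$. Combined with $(-1-r)(-1-s)=(r+1)(s+1)=rs+r+s+1$, this gives
\[
a\binom{-1-r}{2}+b(-1-r)(-1-s)+c\binom{-1-s}{2}=a\binom{r}{2}+brs+c\binom{s}{2}+(2a+b)r+(2c+b)s+(a+b+c).
\]
Substituting everything into $c_{-1-r,-1-s}$, the sign $(-1)^{-1-r-1-s}=(-1)^{r+s}$ and the factors $x^{-1-r}=x^{-1}\cdot x^{-r}$, $y^{-1-s}=y^{-1}\cdot y^{-s}$ pull out an overall $\frac{q^{a+b+c}}{xy}$, and the remaining piece is exactly
\[
(-1)^{r+s}\left(\frac{q^{2a+b}}{x}\right)^{r}\left(\frac{q^{2c+b}}{y}\right)^{s}q^{a\binom{r}{2}+brs+c\binom{s}{2}}.
\]

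Summing over pairs with $\sg(r)=\sg(s)$ then produces $\frac{q^{a+b+c}}{xy}\,g_{a,b,c}(q^{2a+b}/x,q^{2c+b}/y,q)$, which is the desired identity. There is essentially no real obstacle here beyond the bookkeeping; the substantive content of the argument sits in Proposition \ref{proposition:g-shift} (specifically (\ref{equation:H2eq1.14g})), which is taken for granted at this stage. One only needs to be careful that the reindexing $(r,s)\mapsto(-1-r,-1-s)$ preserves the region $\sg(r)=\sg(s)$ (it swaps the two quadrants $r,s\ge 0$ and $r,s<0$ bijectively), so that the summation range on the right is indeed $\sg(r)=\sg(s)$.
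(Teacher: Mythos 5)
Your proposal is correct and follows exactly the route the paper takes: the paper obtains Proposition \ref{proposition:g-flip} precisely by substituting $c_{r,s}=(-1)^{r+s}x^ry^sq^{a\binom{r}{2}+brs+c\binom{s}{2}}$ into (\ref{equation:H2eq1.14g}) and carrying out the same bookkeeping with $\binom{-1-r}{2}=\binom{r}{2}+2r+1$ that you perform explicitly. Your computation of the exponent shift and of the overall factor $q^{a+b+c}/(xy)$ checks out, so there is nothing to add.
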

\begin{proposition}\label{theorem:g-shift} For $x,y\in\mathbb{C}^{*}$ and $\ell, k \in \mathbb{Z}$
\begin{align}
g_{a,b,c}&(x,y,q)=(-x)^{\ell}(-y)^kq^{a\binom{\ell}{2}+b\ell k+ c\binom{k}{2}}g_{a,b,c}(q^{a\ell+bk}x,q^{b\ell + ck}y,q)\label{equation:g-shift}\\
& +\sum_{r=0}^{\ell-1}(-x)^{r}q^{a\binom{r}{2}}\sum_{s\in\mathbb{Z}}\sg(s)(-y)^sq^{brs}q^{c\binom{s}{2}}
+\sum_{s=0}^{k-1}(-y)^{s}q^{c\binom{s}{2}}\sum_{r\in\mathbb{Z}}\sg(r)(-x)^rq^{brs}q^{a\binom{r}{2}} \nonumber\\
& -2\sum_{r=0}^{\ell-1}\sum_{s=0}^{k-1}(-x)^{r}(-y)^{s}q^{a\binom{r}{2}+brs+c\binom{s}{2}}.\nonumber
\end{align}
\end{proposition}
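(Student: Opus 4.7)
The plan is to derive this proposition as a direct specialization of the general summation identity (\ref{equation:H2eq1.15g}) in Proposition \ref{proposition:g-shift}, which is already available. I substitute
\[
c_{r,s} := (-1)^{r+s} x^r y^s q^{a\binom{r}{2}+brs+c\binom{s}{2}}
\]
and check that each of the four terms on the right-hand side of (\ref{equation:H2eq1.15g}) matches the corresponding term in the claim. Only elementary bookkeeping is involved — the conceptual content has already been absorbed into the $c_{r,s}$-version.

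For the shifted sum $\sum_{\sg(r)=\sg(s)} c_{r+\ell,s+k}$, I expand using
\[
\binom{r+\ell}{2} = \binom{r}{2} + r\ell + \binom{\ell}{2}, \quad \binom{s+k}{2} = \binom{s}{2} + sk + \binom{k}{2},
\]
together with $(r+\ell)(s+k) = rs + rk + \ell s + \ell k$. The $q$-exponent then splits as $a\binom{r}{2} + brs + c\binom{s}{2}$ (the "new" Hecke–Rogers piece in shifted base variables $x q^{a\ell+bk}$ and $y q^{b\ell+ck}$) plus the constant $a\binom{\ell}{2} + b\ell k + c\binom{k}{2}$. Factoring $(-x)^\ell (-y)^k q^{a\binom{\ell}{2}+b\ell k+c\binom{k}{2}}$ out of the summand leaves exactly $g_{a,b,c}(x q^{a\ell+bk}, y q^{b\ell+ck}, q)$, giving the first term of the proposition.

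The two single-variable sums are immediate. In $\sum_{r=0}^{\ell-1}\sum_{s\in\mathbb{Z}} \sg(s) c_{r,s}$, the $r$-dependent factors $(-x)^r q^{a\binom{r}{2}}$ pull outside the inner sum over $s$, leaving $\sum_{s\in\mathbb{Z}} \sg(s)(-y)^s q^{brs+c\binom{s}{2}}$, which is precisely the "partial theta" factor appearing in the claim. The other sum is symmetric in the roles of $(r,\ell,x,a)$ and $(s,k,y,c)$. The corner correction $-2\sum_{r=0}^{\ell-1}\sum_{s=0}^{k-1} c_{r,s}$ is already in the desired form after substitution.

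The only subtlety — and what I expect to be the main thing to verify rather than a serious obstacle — is that the identity is claimed for arbitrary $\ell,k\in\mathbb{Z}$, including negative values. Here the sum convention (\ref{equation:sumconvention}) must be applied consistently: a sum $\sum_{r=0}^{\ell-1}$ with $\ell \le 0$ is $-\sum_{r=\ell}^{-1}$, and one should check the signs propagate correctly so that the shift formula for $\ell<0$ or $k<0$ follows from the $\ell,k\ge 0$ case by running the substitution backwards (equivalently, by applying the identity with $(\ell,k)$ replaced by $(-\ell,-k)$ to the shifted $g_{a,b,c}$). Once this sign bookkeeping is confirmed, the proposition is established.
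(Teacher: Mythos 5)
Your proposal is correct and follows exactly the paper's route: the paper likewise obtains Proposition \ref{theorem:g-shift} by substituting $c_{r,s}=(-1)^{r+s}x^ry^sq^{a\binom{r}{2}+brs+c\binom{s}{2}}$ into (\ref{equation:H2eq1.15g}) and absorbing the binomial-coefficient expansions into the shifted arguments of $g_{a,b,c}$, with the convention (\ref{equation:sumconvention}) handling negative $\ell,k$. Your bookkeeping of the exponents and the sign factor $(-x)^{\ell}(-y)^k$ is accurate, so nothing further is needed.
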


\begin{proof}[Proof of Proposition \ref{proposition:g-shift}]  The first identity follows from fact that 
\begin{equation*}
\sg(r)=-\sg(-1-r).
\end{equation*}
For the second identity, a direct calculation yields
{\allowdisplaybreaks \begin{align*}
\sum_{\sg(r)=\sg(s)}c_{r,s}-\sum_{\sg(r)=\sg(s)}c_{r+\ell,s+k}
&=\sum_{\sg(r)=\sg(s)}c_{r,s}-\sum_{\sg(r-\ell)=\sg(s-k)}c_{r,s}\\
&=\Big ( \sum_{\substack{r\ge 0 \\s\ge 0}}+ \sum_{\substack{r< 0 \\s< 0}}
-\sum_{\substack{r-\ell\ge 0 \\s-k\ge 0}}- \sum_{\substack{r-\ell< 0 \\s-k< 0}}\Big ) c_{r,s}\\
&=\Big [ \sum_{\substack{r\ge 0 \\s\ge 0}}-\sum_{\substack{r-\ell\ge 0 \\s-k\ge 0}}
-\Big (  \sum_{\substack{r-\ell< 0 \\s-k< 0}}- \sum_{\substack{r< 0 \\s< 0}}\Big ) \Big ] c_{r,s}\\
&=\Big [ \sum_{\substack{r\ge 0 \\ 0\le s < k}}+\sum_{\substack{0\le r< \ell \\ s\ge 0 }}
-\sum_{\substack{0\le r< \ell \\ 0\le s < k }}
 -\Big (  \sum_{\substack{r< \ell \\0\le s < k}} +  \sum_{\substack{0\le  r <\ell \\ s<k}} 
-\sum_{\substack{0\le r< \ell \\ 0\le s < k }}\Big ) \Big ]c_{r,s}\\
&=\Big [ \sum_{\substack{r\ge 0 \\ 0\le s < k}}+\sum_{\substack{0\le r< \ell \\ s\ge 0 }}
-\sum_{\substack{0\le r< \ell \\ 0\le s < k }}
 -\Big (  \sum_{\substack{0\le r< \ell \\0\le s < k}} +\sum_{\substack{r< 0 \\0\le s < k}} 
 +  \sum_{\substack{0\le  r <\ell \\ s<0}} \Big ) \Big ]c_{r,s}\\
&=\sum_{r=0}^{\ell-1}\sum_{s\in\mathbb{Z}}\sg(s)c_{r,s}
+\sum_{s=0}^{k-1}\sum_{r\in\mathbb{Z}}\sg(r)c_{r,s}
-2\sum_{r=0}^{\ell-1}\sum_{s=0}^{k-1}c_{r,s}.\qedhere
\end{align*}}%
\end{proof}


\section{Remarks on weight one-half double-sums}\label{section:remarks}
In this section we demonstrate by example that one can obtain weight one-half theta functions from (\ref{equation:fabc-def})  via both (\ref{equation:f-shift}) and (\ref{equation:f121}).  We take a specialization of (\ref{equation:f-shift})
{\allowdisplaybreaks \begin{align*}
f_{1,2,1}(-q,q,q^2)&=q^{\ell}(-q)^kq^{2\binom{\ell}{2}+4\ell k+2\binom{k}{2}}f_{1,2,1}(q^{2\ell+4k}x,q^{4\ell+2k}y,q^2) \\
&\ \ \ \ +\sum_{m=0}^{\ell-1}(-x)^mq^{2\binom{m}{2}}j(q^{4m}y;q^2)
+\sum_{m=0}^{k-1}(-y)^mq^{2\binom{m}{2}}j(q^{4m}x;q^2),
\end{align*}}%
and further specialize with $(\ell,k)=(1,1)$ to obtain
{\allowdisplaybreaks \begin{align*}
f_{1,2,1}(-q,q,q^2)&=-q^6f_{1,2,1}(-q^7,q^7,q^2) +j(q;q^2) +j(-q;q^2)\\
&=- f_{1,2,1}(-q,q,q^2)+j(q;q^2) +j(-q;q^2),
\end{align*}}%
where we have used (\ref{equation:f-flip}).   Comparing the extreme left with the extreme right gives
\begin{align*}
f_{1,2,1}(-q,q,q^2)&=\frac{1}{2}\Big ( j(q;q^2) +j(-q;q^2)\Big )
=j(-q^4;q^8),
\end{align*}
where we have used the fact \cite[(2.2f)]{HM}:
\begin{equation*}
j(z;q)=j(-qz^2;q^4)-zj(-q^3z^2;q^4).
\end{equation*}

Specializing (\ref{equation:f121}) we have
\begin{align*}
f_{1,2,1}(-q,q,q^2)&=j(q;q^2)m\big (-q^3,q^6,-1\big )+j(-q;q^2)m\big (q^3,q^6,-1\big )\\
& \ \ \ \ \ -\frac{1}{\overline{J}_{0,6}}\cdot \frac{qJ_{6}^3j(1;q^2)j(-q^6;q^6)}
{j(q^3;q^6)j(-q^3;q^6)}\\
&= j(q;q^2)\cdot\frac{1}{2} +j(-q;q^2)\cdot \frac{1}{2}+0,\\
&=j(-q^4;q^8),
\end{align*}
where we have used \cite[(3.3)]{HM}
\begin{equation*}
m(q,q^2,-1)=\frac{1}{2},
\end{equation*}
the fact $j(1;q^2)=0$, and again \cite[(2.2f)]{HM}.


\section{False theta function identities of Chan and Kim}\label{section:chanKim}

\subsection{Proof of Identity (\ref{equation:Warnaar})}
We begin with a proposition
\begin{proposition}\label{proposition:prop1}
We have
\begin{align}
g_{1,2,2}(q,-q^3,q)&=(-q)^{\ell}q^{3k}q^{\binom{\ell}{2}+2\ell k + 2\binom{k}{2}}g_{1,2,2}(q^{\ell+2k+1},-q^{2\ell+2k+3},q)
\label{equation:prop1eq}\\
& \ \ \ \ \ -\sum_{r=0}^{\ell-1}(-1)^rq^{\binom{r+1}{2}} \sum_{s=1}^{2r+1}q^{s(s-2-2r)}
 +2\sum_{s=0}^{k-1}q^{s^2+2s} 
\sum_{r=0}^{\infty}(-1)^rq^{r(r+1+4s)/2}  \nonumber\\
& \ \ \ \ \ -2\sum_{r=0}^{\ell-1}\sum_{s=0}^{k-1}(-1)^{r}q^{r+3s+\binom{r}{2}+2rs+2\binom{s}{2}}.\nonumber
\end{align}
\end{proposition}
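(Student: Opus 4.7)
The statement is the specialization of Proposition~\ref{theorem:g-shift} with $(a,b,c,x,y)=(1,2,2,q,-q^3)$, after which the two inner one-dimensional $\sg$-weighted sums over $\mathbb{Z}$ need to be evaluated. The first line of (\ref{equation:prop1eq}) follows by reading off: $(-x)^\ell(-y)^k=(-q)^\ell q^{3k}$, the quadratic exponent collapses to $\binom{\ell}{2}+2\ell k+2\binom{k}{2}$, and the shifted arguments become $q^{\ell+2k+1}$ and $-q^{2\ell+2k+3}$. The final double sum in (\ref{equation:prop1eq}) comes out of the $-2\sum_{r,s}$ term of (\ref{equation:g-shift}) after collecting exponents via $r+\binom{r}{2}=\binom{r+1}{2}$ and $s^2-s=2\binom{s}{2}$. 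All the remaining work is in the two inner sums.

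For the first inner sum, absorbing the outer factor $(-q)^r q^{\binom{r}{2}}=(-1)^r q^{\binom{r+1}{2}}$ reduces us to evaluating
\[
S_1(r):=\sum_{s\in\mathbb{Z}}\sg(s)\,q^{s^2+(2+2r)s}.
\]
The quadratic $Q(s)=s(s+2+2r)$ is invariant under the reflection $s\mapsto -(2+2r)-s$, and for $s\ge 0$ its image $s'\le -(2+2r)\le -2$ carries the opposite value of $\sg$, so those terms cancel in pairs. The only survivors lie in the window $s\in\{-1,\ldots,-(2r+1)\}$, all carrying $\sg=-1$; substituting $s=-u$ yields $S_1(r)=-\sum_{u=1}^{2r+1}q^{u(u-2-2r)}$, which matches the first single-sum term of (\ref{equation:prop1eq}).

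For the second inner sum, after absorbing $(-y)^s q^{s^2-s}=q^{s^2+2s}$ one must evaluate
\[
S_2(s):=\sum_{r\in\mathbb{Z}}\sg(r)(-1)^r q^{r(r+1+4s)/2},
\]
whose quadratic is invariant under $r\mapsto -(1+4s)-r$. Since $1+4s$ is odd, this reflection is fixed-point-free and $(-1)^r$ flips under it. Pairing $r\ge 0$ with its image $r'\le -1-4s$: both $\sg$ and $(-1)^r$ flip, so the two contributions agree in sign and give $2(-1)^r q^{r(r+1+4s)/2}$. Pairing $r$ with its image inside the middle window $\{-1,\ldots,-4s\}$: $\sg$ is unchanged (both values equal $-1$) while $(-1)^r$ still flips, so those terms cancel in pairs. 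Hence $S_2(s)=2\sum_{r\ge 0}(-1)^r q^{r(r+1+4s)/2}$, matching the second single-sum term of (\ref{equation:prop1eq}).

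The delicate step is the evaluation of $S_2(s)$: the oddness of $1+4s$ is precisely what makes the reflection fixed-point-free, forces $(-1)^r$ to flip, and — combined with the different behavior of $\sg$ on the outer ranges versus on the middle window — produces the factor of $2$ that converts a $\mathbb{Z}$-sum into the half-line false-theta-like sum on the right-hand side of (\ref{equation:prop1eq}). Once $S_1$ and $S_2$ are reduced, (\ref{equation:prop1eq}) assembles directly from Proposition~\ref{theorem:g-shift}.
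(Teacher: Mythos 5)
Your proposal is correct and follows essentially the same route as the paper: specialize Proposition \ref{theorem:g-shift} at $(a,b,c,x,y)=(1,2,2,q,-q^3)$ and then evaluate the two bilateral $\sg$-weighted sums using the reflections $s\mapsto-(2+2r)-s$ and $r\mapsto-(1+4s)-r$. The paper organizes the same computation by splitting each $\mathbb{Z}$-sum into two half-line sums, shifting indices, and showing the leftover finite alternating sum $\sum_{r=1}^{4s}(-1)^rq^{r(r-1-4s)/2}$ vanishes via $r\to 4s+1-r$ --- which is precisely your middle-window pairing --- so the two arguments coincide.
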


\begin{proof}[Proof of Proposition \ref{proposition:prop1}]
We take the appropriate specializations of (\ref{equation:g-shift}).  We have
{\allowdisplaybreaks \begin{align*}
g_{1,2,2}&(q,-q^3,q)-(-q)^{\ell}q^{3k}q^{\binom{\ell}{2}+2\ell k + 2\binom{k}{2}}g_{1,2,2}(q^{\ell+2k+1},-q^{2\ell+2k+3},q)\\
&=\sum_{r=0}^{\ell-1}(-1)^rq^{\binom{r+1}{2}}\sum_{s\in\mathbb{Z}}\sg(s)q^{s^2+2s+2rs}
 +\sum_{s=0}^{k-1}(q^3)^{s}q^{2\binom{s}{2}}\sum_{r\in\mathbb{Z}}\sg(r)(-1)^r(q^{2s}q)^rq^{\binom{r}{2}}\\
& \ \ \ \ \ -2\sum_{r=0}^{\ell-1}\sum_{s=0}^{k-1}(-1)^{r}q^{r+3s+\binom{r}{2}+2rs+2\binom{s}{2}}\\
&=\sum_{r=0}^{\ell-1}(-1)^rq^{\binom{r+1}{2}}\Big [ \sum_{s=0}^{\infty}q^{s(s+2+2r)}
-\sum_{s=1}^{\infty}q^{s(s-2-2r)}\Big ]\\
& \ \ \ \ \ +\sum_{s=0}^{k-1}q^{s^2+2s}\Big [ \sum_{r=0}^{\infty}(-1)^rq^{r(r+1+4s)/2}
-\sum_{r=1}^{\infty}(-1)^rq^{r(r-1-4s)/2}\Big ] \\
& \ \ \ \ \ -2\sum_{r=0}^{\ell-1}\sum_{s=0}^{k-1}(-1)^{r}q^{r+3s+\binom{r}{2}+2rs+2\binom{s}{2}}\\
& = -\sum_{r=0}^{\ell-1}(-1)^rq^{\binom{r+1}{2}} \sum_{s=1}^{2r+1}q^{s(s-2-2r)}\\
& \ \ \ \ \ +\sum_{s=0}^{k-1}q^{s^2+2s} \Big [ 
2\sum_{r=0}^{\infty}(-1)^rq^{r(r+1+4s)/2} - \sum_{r=1}^{4s}(-1)^rq^{r(r-1-4s)/2}\Big ]\\
& \ \ \ \ \ -2\sum_{r=0}^{\ell-1}\sum_{s=0}^{k-1}(-1)^{r}q^{r+3s+\binom{r}{2}+2rs+2\binom{s}{2}}\\
&= -\sum_{r=0}^{\ell-1}(-1)^rq^{\binom{r+1}{2}} \sum_{s=1}^{2r+1}q^{s(s-2-2r)}
 +2\sum_{s=0}^{k-1}q^{s^2+2s} \sum_{r=0}^{\infty}(-1)^rq^{r(r+1+4s)/2} \\
& \ \ \ \ \ -2\sum_{r=0}^{\ell-1}\sum_{s=0}^{k-1}(-1)^{r}q^{r+3s+\binom{r}{2}+2rs+2\binom{s}{2}},
\end{align*}}%
where for the last equality we used the easily shown fact that
\begin{equation}
\sum_{r=1}^{4s}(-1)^rq^{r(r-1-4s)/2}=0.
\end{equation}
One simply notes that
\begin{equation}
\sum_{r=1}^{4s}(-1)^rq^{r(r-1-4s)/2}=-\sum_{r=1}^{4s}(-1)^rq^{r(r-1-4s)/2}=0,\label{equation:revSum}
\end{equation}
where for the sum on the right we have made the substitution $r\rightarrow 4s+1-r$.\qedhere
\end{proof}
\begin{proof}[Proof of Identity (\ref{equation:Warnaar})] We use Proposition \ref{proposition:prop1} with $(\ell,k)=(-2,2)$.  We have
{\allowdisplaybreaks \begin{align*}
g_{1,2,2}(q,-q^3,q)&=qg_{1,2,2}(q^3,-q^3,q)
 -\sum_{r=0}^{-3}(-1)^rq^{\binom{r+1}{2}} \sum_{s=1}^{2r+1}q^{s(s-2-2r)}\\
& \ \ \ \ \ + 2\sum_{s=0}^{1}q^{s^2+2s} \sum_{r=0}^{\infty}(-1)^rq^{r(r+1+4s)/2} 
 -2\sum_{r=0}^{-3}\sum_{s=0}^{1}(-1)^{r}q^{r+3s+\binom{r}{2}+2rs+2\binom{s}{2}}\\
&=-g_{1,2,2}(q,-q^3,q)
 +\sum_{r=-2}^{-1}(-1)^rq^{\binom{r+1}{2}} \sum_{s=1}^{2r+1}q^{s(s-2-2r)}\\
& \ \ \ \ \ + 2\sum_{s=0}^{1}q^{s^2+2s}  
\sum_{r=0}^{\infty}(-1)^rq^{r(r+1+4s)/2} 
+2\sum_{r=-2}^{-1}\sum_{s=0}^{1}(-1)^{r}q^{r+3s+\binom{r}{2}+2rs+2\binom{s}{2}}\\
&=-g_{1,2,2}(q,-q^3,q)
+q \sum_{s=1}^{-3}q^{s(s+2)}
- \sum_{s=1}^{-1}q^{s^2}\\
& \ \ \ \ \ +  2\sum_{r=0}^{\infty}(-1)^rq^{r(r+1)/2}
 + 2q^{3} \sum_{r=0}^{\infty}(-1)^rq^{r(r+5)/2}  \\
& \ \ \ \ \  +2\sum_{r=-2}^{-1}\Big ( (-1)^{r}q^{r+\binom{r}{2}}
+(-1)^{r}q^{\binom{r}{2}+3r+3}\Big ) \\
&=-g_{1,2,2}(q,-q^3,q) -q \sum_{s=-2}^{0}q^{s(s+2)}
+ \sum_{s=0}^{0}q^{s^2}\\
& \ \ \ \ \ +  2\sum_{r=0}^{\infty}(-1)^rq^{r(r+1)/2}  + 2\sum_{r=0}^{\infty}(-1)^rq^{(r+2)(r+3)/2}
+ 2( q-1+1-q)\\
&=-g_{1,2,2}(q,-q^3,q)
 -2q +  2\sum_{r=0}^{\infty}(-1)^rq^{r(r+1)/2}  + 2\sum_{r=2}^{\infty}(-1)^rq^{r(r+1)/2}\\
&=-g_{1,2,2}(q,-q^3,q)
 -2 +  4\sum_{r=0}^{\infty}(-1)^rq^{r(r+1)/2},
\end{align*}}%
where in the second equality we used (\ref{equation:g-flip}).
\end{proof}


\subsection{Proof of Identity (\ref{equation:Prop3pt4A})}
We begin with a proposition.
\begin{proposition}\label{proposition:prop2} We have
\begin{align}
g_{2,2,2}(-q^2,-q^3,q)
&=q^{2\ell}q^{3k}q^{2\binom{\ell}{2}+2\ell k+ 2\binom{k}{2}}g_{2,2,2}(-q^{2\ell+2k+2},-q^{2\ell + 2k+3},q)\label{equation:prop2eq}\\
& \ \ \ \ \ -\sum_{r=0}^{\ell-1}q^{r^2+r}\sum_{s=1}^{2r+1}q^{s(s-2-2r)}
-\sum_{s=0}^{k-1}q^{s^2+2s}\sum_{r=1}^{2s}q^{r(r-1-2s)}\nonumber\\
& \ \ \ \ \ -2\sum_{r=0}^{\ell-1}\sum_{s=0}^{k-1}q^{2r}q^{3s}q^{2\binom{r}{2}+2rs+2\binom{s}{2}}. \nonumber
\end{align}
\end{proposition}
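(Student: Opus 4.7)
The plan is to mimic the proof of Proposition \ref{proposition:prop1} line for line, but with the specialization $(a,b,c)=(2,2,2)$ and $(x,y)=(-q^2,-q^3)$ of Proposition \ref{theorem:g-shift}. The substitutions $(-x)^{\ell}=q^{2\ell}$, $(-y)^{k}=q^{3k}$, $q^{a\ell+bk}x=-q^{2\ell+2k+2}$, and $q^{b\ell+ck}y=-q^{2\ell+2k+3}$ immediately produce the shifted $g$-term on the right of (\ref{equation:prop2eq}); the prefactors $(-x)^{r}q^{a\binom{r}{2}}=q^{r^{2}+r}$ and $(-y)^{s}q^{c\binom{s}{2}}=q^{s^{2}+2s}$ come out on the nose; and the ``pure'' finite double sum $-2\sum_{r=0}^{\ell-1}\sum_{s=0}^{k-1}q^{2r+3s+2\binom{r}{2}+2rs+2\binom{s}{2}}$ is a direct transcription of the last line of (\ref{equation:g-shift}).

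The nontrivial work is collapsing the two inner bilateral sums. For the $r$-indexed outer sum, after splitting by sign of $s$ and sending $s\mapsto -s$ on the negative tail one obtains
\[
\sum_{s\in\mathbb{Z}}\sg(s)q^{s(s+2+2r)}=\sum_{s=0}^{\infty}q^{s(s+2+2r)}-\sum_{s=1}^{\infty}q^{s(s-2-2r)}.
\]
The substitution $s\mapsto s+(2r+2)$ carries $q^{s(s-2-2r)}$ to $q^{s(s+2+2r)}$, so $\sum_{s=2r+2}^{\infty}q^{s(s-2-2r)}=\sum_{s=0}^{\infty}q^{s(s+2+2r)}$, and the difference collapses to $-\sum_{s=1}^{2r+1}q^{s(s-2-2r)}$, which is exactly the first finite sum in (\ref{equation:prop2eq}). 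Symmetrically, the $s$-indexed outer sum produces $\sum_{r\in\mathbb{Z}}\sg(r)q^{r(r+1+2s)}$; the shift $r\mapsto r+(2s+1)$ sends $q^{r(r-1-2s)}$ to $q^{r(r+1+2s)}$ and turns the bilateral sum into $-\sum_{r=1}^{2s}q^{r(r-1-2s)}$, matching the second finite sum.

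One subtlety worth flagging: in Proposition \ref{proposition:prop1} the inner sums carried a $(-1)^r$ factor (because $x=q$ there, so $(-x)^r=(-q)^r$), which forced an additional cancellation $\sum_{r=1}^{4s}(-1)^rq^{r(r-1-4s)/2}=0$ via $r\mapsto 4s+1-r$. In the present case both $x=-q^2$ and $y=-q^3$ are negative, so $(-x)^r=q^{2r}$ and $(-y)^s=q^{3s}$ carry no alternating sign; no such vanishing argument is available, and the finite sums $\sum_{s=1}^{2r+1}q^{s(s-2-2r)}$ and $\sum_{r=1}^{2s}q^{r(r-1-2s)}$ survive in the final identity, exactly as stated. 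I do not expect any obstacle beyond tracking exponents carefully.
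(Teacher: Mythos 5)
Your proposal is correct and follows essentially the same route as the paper: specialize Proposition \ref{theorem:g-shift} to $(a,b,c)=(2,2,2)$, $(x,y)=(-q^2,-q^3)$, split each bilateral inner sum by the sign of the summation index, and collapse the two tails via the shifts $s\mapsto s+2r+2$ and $r\mapsto r+2s+1$ to leave the finite sums $-\sum_{s=1}^{2r+1}q^{s(s-2-2r)}$ and $-\sum_{r=1}^{2s}q^{r(r-1-2s)}$. Your observation that no analogue of the vanishing sum (\ref{equation:revSum}) is needed here, because $(-x)^r$ and $(-y)^s$ carry no alternating sign, is exactly consistent with the paper's treatment.
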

\begin{proof}[Proof of Proposition \ref{proposition:prop2}]
We take the appropriate specializations of (\ref{equation:g-shift}) to have
{\allowdisplaybreaks \begin{align*}
g_{2,2,2}&(-q^2,-q^3,q)
-q^{2\ell}q^{3k}q^{2\binom{\ell}{2}+2\ell k+ 2\binom{k}{2}}g_{2,2,2}(-q^{2\ell+2k+2},-q^{2\ell + 2k+3},q)\\
&=\sum_{r=0}^{\ell-1}q^{r^2+r}\sum_{s\in\mathbb{Z}}\sg(s)q^{s^2+2s+2rs} 
+\sum_{s=0}^{k-1}q^{s^2+2s}\sum_{r\in\mathbb{Z}}^{\infty}\sg(r)q^{r^2+r+2rs}\\
& \ \ \ \ \ -2\sum_{r=0}^{\ell-1}\sum_{s=0}^{k-1}q^{2r}q^{3s}q^{2\binom{r}{2}+2rs+2\binom{s}{2}}\\
&=\sum_{r=0}^{\ell-1}q^{r^2+r}(\sum_{s=0}^{\infty}q^{s(s+2+2r)} 
-\sum_{s=1}^{\infty}q^{s(s-2-2r)} ) \\
& \ \ \ \ \ +\sum_{s=0}^{k-1}q^{s^2+2s}(\sum_{r=0}^{\infty}q^{r(r+1+2s)}
-\sum_{r=1}^{\infty}q^{r(r-1-2s)})\\
& \ \ \ \ \ -2\sum_{r=0}^{\ell-1}\sum_{s=0}^{k-1}q^{2r}q^{3s}q^{2\binom{r}{2}+2rs+2\binom{s}{2}}\\
&=-\sum_{r=0}^{\ell-1}q^{r^2+r}\sum_{s=1}^{2r+1}q^{s(s-2-2r)}
 -\sum_{s=0}^{k-1}q^{s^2+2s}\sum_{r=1}^{2s}q^{r(r-1-2s)}\\
& \ \ \ \ \ -2\sum_{r=0}^{\ell-1}\sum_{s=0}^{k-1}q^{2r}q^{3s}q^{2\binom{r}{2}+2rs+2\binom{s}{2}}.\qedhere
\end{align*}}%
\end{proof}

\begin{proof}[Proof of Identity (\ref{equation:Prop3pt4A})]
We consider two cases of the general formula (\ref{equation:prop2eq}).  The first when $(\ell,k)=(1,0)$ and the second when $(\ell,k)=(0,1)$.  We have
{\allowdisplaybreaks \begin{align}
g_{2,2,2}(-q^2,-q^3,q)&=q^2g_{2,2,2}(-q^4,-q^5,q)-q^{-1}\label{equation:Prop3pt4A-pre1},\\
g_{2,2,2}(-q^2,-q^3,q)&=q^3g_{2,2,2}(-q^4,-q^5,q)\label{equation:Prop3pt4A-pre2}.
\end{align}}%
The result follows from multiplying (\ref{equation:Prop3pt4A-pre1}) by $q$ and subtracting the result from (\ref{equation:Prop3pt4A-pre2}).
\end{proof}


\subsection{Proof of Identity (\ref{equation:Prop3pt4B})}
We have a proposition.
\begin{proposition}\label{proposition:prop3} We have
\begin{align}
g_{2,2,2}(q^2,-q^3,q)
& = (-q^2)^{\ell}q^{3k}q^{2\binom{\ell}{2}+2\ell k+ 2\binom{k}{2}}g_{2,2,2}(q^{2\ell+2k+2},-q^{2\ell + 2k+3},q) \label{equation:prop3eq} \\
& \ \ \ \ \ -\sum_{r=0}^{\ell-1}(-1)^{r}q^{r^2+r}\sum_{s=1}^{2r+1}q^{s(s-2-2r)}
+2\sum_{s=0}^{k-1}q^{s^2+2s}\sum_{r=0}^{\infty}(-1)^rq^{r(r+1+2s)} \nonumber\\
& \ \ \ \ \ -2\sum_{r=0}^{\ell-1}\sum_{s=0}^{k-1}(-1)^rq^{2r}q^{3s}q^{2\binom{r}{2}+2rs+2\binom{s}{2}}. \nonumber
\end{align}
\end{proposition}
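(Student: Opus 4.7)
The plan is to mirror the structure of the proofs of Propositions~\ref{proposition:prop1} and~\ref{proposition:prop2} by specializing the master shift identity (\ref{equation:g-shift}) with $(a,b,c)=(2,2,2)$, $x=q^2$, and $y=-q^3$. Since $-x=-q^2$ and $-y=q^3$, the leading $g$-term on the right of (\ref{equation:g-shift}) immediately produces $(-q^2)^{\ell}q^{3k}q^{2\binom{\ell}{2}+2\ell k+2\binom{k}{2}}g_{2,2,2}(q^{2\ell+2k+2},-q^{2\ell+2k+3},q)$, and the terminating double sum is already in the desired form.

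The substantive work is the evaluation of the two single sums. For the $r$-indexed sum, simplifying the exponents via $2r+2\binom{r}{2}=r^2+r$ brings the inner sum over $s\in\Z$ to the shape $\sum_{s\in\Z}\sg(s)q^{s(s+2+2r)}$, weighted by $(-1)^r q^{r^2+r}$ (the $(-1)^r$ coming from $(-x)^r=(-q^2)^r$). Folding via $s\mapsto -s$ and then peeling off the first $2r+1$ terms of the negative-tail by the substitution $s\mapsto 2r+2-s$ causes the non-peeled part of the tail to cancel the entire $s\ge 0$ contribution, leaving the claimed $-\sum_{s=1}^{2r+1}q^{s(s-2-2r)}$. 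This is the identical maneuver used in the proof of Proposition~\ref{proposition:prop2}.

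The interesting new feature lies in the $s$-indexed single sum. Because the $r$-summand now carries an alternating $(-1)^r$ from $(-x)^r=(-q^2)^r$, the inner sum over $r\in\Z$ collapses to the difference $\sum_{r=0}^{\infty}(-1)^rq^{r(r+1+2s)}-\sum_{r=1}^{\infty}(-1)^rq^{r(r-1-2s)}$. Here I will invoke the analog of the vanishing identity (\ref{equation:revSum}): the substitution $r\mapsto 2s+1-r$ is a sign-reversing involution on $\{0,1,\dots,2s+1\}$ that fixes $r(r-1-2s)$, so $\sum_{r=0}^{2s+1}(-1)^rq^{r(r-1-2s)}=0$. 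Combined with the shift $r\mapsto r+(2s+1)$ applied to the tail $r\ge 2s+2$ of the second sum, a short rearrangement shows that the two inner sums telescope to $2\sum_{r=0}^{\infty}(-1)^rq^{r(r+1+2s)}$, which is exactly the middle term of (\ref{equation:prop3eq}).

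The main obstacle I anticipate is purely bookkeeping: tracking the boundary contributions produced when reindexing by $r\mapsto 2s+1-r$ and $r\mapsto r+(2s+1)$, in particular the constant term coming from the $r=0$ element of the involution range, and verifying that it combines with the $r=0$ term of $\sum_{r=0}^{\infty}(-1)^rq^{r(r+1+2s)}$ so as to shift the surviving summation from $r\ge 1$ down to $r\ge 0$. Apart from this sign-flipped variant of the folding, the argument is a direct adaptation of the two previous propositions and requires no new ideas.
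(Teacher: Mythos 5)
Your proposal is correct and follows essentially the same route as the paper: specialize (\ref{equation:g-shift}) at $(a,b,c)=(2,2,2)$, $x=q^2$, $y=-q^3$, fold the two bilateral inner sums, cancel the tail of the $r$-indexed one against its nonnegative part to leave $-\sum_{s=1}^{2r+1}q^{s(s-2-2r)}$, and reduce the $s$-indexed one to $2\sum_{r\ge 0}(-1)^rq^{r(r+1+2s)}$ via the shift $r\mapsto r+2s+1$ together with the sign-reversing involution $r\mapsto 2s+1-r$, exactly as in (\ref{equation:revSum}). The only difference is cosmetic index bookkeeping (you run the involution over $\{0,\dots,2s+1\}$ where the paper uses $\{1,\dots,2s\}$), and your anticipated boundary-term check works out as you describe.
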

\begin{proof}[Proof of Proposition \ref{proposition:prop3}]
We take the appropriate specialization of (\ref{equation:g-shift}) to have
{\allowdisplaybreaks \begin{align*}
g_{2,2,2}&(q^2,-q^3,q)
 - (-q^2)^{\ell}q^{3k}q^{2\binom{\ell}{2}+2\ell k+ 2\binom{k}{2}}g_{2,2,2}(q^{2\ell+2k+2},-q^{2\ell + 2k+3},q) \\
&=\sum_{r=0}^{\ell-1}(-1)^{r}q^{r^2+r}\sum_{s\in\mathbb{Z}}\sg(s)q^{s^2+2s+2rs}
  +\sum_{s=0}^{k-1}q^{s^2+2s}\sum_{r=0}^{\infty}\sg(r)(-1)^rq^{r^2+r+2rs}\\
& \ \ \ \ \ -2\sum_{r=0}^{\ell-1}\sum_{s=0}^{k-1}(-1)^rq^{2r}q^{3s}q^{2\binom{r}{2}+2rs+2\binom{s}{2}} \\
&=\sum_{r=0}^{\ell-1}(-1)^{r}q^{r^2+r}(\sum_{s=0}^{\infty}q^{s(s+2+2r)} 
-\sum_{s=1}^{\infty}q^{s(s-2-2r)} )\\
& \ \ \ \ \   +\sum_{s=0}^{k-1}q^{s^2+2s}(\sum_{r=0}^{\infty}(-1)^rq^{r(r+1+2s)}
-\sum_{r=1}^{\infty}(-1)^rq^{r(r-1-2s)})\\
& \ \ \ \ \ -2\sum_{r=0}^{\ell-1}\sum_{s=0}^{k-1}(-1)^rq^{2r}q^{3s}q^{2\binom{r}{2}+2rs+2\binom{s}{2}} \\
&=-\sum_{r=0}^{\ell-1}(-1)^{r}q^{r^2+r}\sum_{s=1}^{2r+1}q^{s(s-2-2r)}\\
& \ \ \ \ \   +\sum_{s=0}^{k-1}q^{s^2+2s}(2\sum_{r=0}^{\infty}(-1)^rq^{r(r+1+2s)}-\sum_{r=1}^{2s}(-1)^rq^{r(r-1-2s)})\\
& \ \ \ \ \ -2\sum_{r=0}^{\ell-1}\sum_{s=0}^{k-1}(-1)^rq^{2r}q^{3s}q^{2\binom{r}{2}+2rs+2\binom{s}{2}} \\
&=-\sum_{r=0}^{\ell-1}(-1)^{r}q^{r^2+r}\sum_{s=1}^{2r+1}q^{s(s-2-2r)}
  +2\sum_{s=0}^{k-1}q^{s^2+2s}\sum_{r=0}^{\infty}(-1)^rq^{r(r+1+2s)}\\
& \ \ \ \ \ -2\sum_{r=0}^{\ell-1}\sum_{s=0}^{k-1}(-1)^rq^{2r}q^{3s}q^{2\binom{r}{2}+2rs+2\binom{s}{2}},
\end{align*}}%
where for the last equality we argued as in (\ref{equation:revSum}).
\end{proof}
\begin{proof}[Proof of Identity (\ref{equation:Prop3pt4B})]  We consider two cases of the general formula (\ref{equation:prop3eq}).  The first when $(\ell,k)=(1,0)$ and the second when $(\ell,k)=(0,1)$.  We have
\begin{align}
g_{2,2,2}(q^2,-q^3,q)&=-q^2g_{2,2,2}(q^4,-q^5,q)-q^{-1},\label{equation:Prop3pt4B-pre1}\\
g_{2,2,2}(q^2,-q^3,q)&=q^3g_{2,2,2}(q^4,-q^5,q)+2\sum_{r=0}^{\infty}(-1)^rq^{r(r+1)}
\label{equation:Prop3pt4B-pre2}.
\end{align}
Multiplying (\ref{equation:Prop3pt4B-pre1}) by $q$ and adding the result to (\ref{equation:Prop3pt4B-pre2}) yields the result.
\end{proof}


\subsection{Proof of Identity (\ref{equation:Thm1pt1A})} We begin with a proposition:
\begin{proposition}\label{proposition:prop4}
\begin{align}
g_{1,2,4}(-q,-q^4,q)&=q^{\ell}q^{4k}q^{\binom{\ell}{2}+2\ell k+ 4\binom{k}{2}}g_{1,2,4}(-q^{\ell+2k+1},-q^{2\ell + 4k+4},q)\label{equation:prop4eq} \\
& \ \ \ \ \ -\sum_{r=0}^{\ell-1}q^{\binom{r+1}{2}}\sum_{s=1}^{r}q^{2s(s-1-r)}
-\sum_{s=0}^{k-1}q^{2s^2+2s}\sum_{r=1}^{4s}q^{r(r-1-4s)/2}\nonumber \\
& \ \ \ \ \ -2\sum_{r=0}^{\ell-1}\sum_{s=0}^{k-1}q^{r+4s}q^{\binom{r}{2}+2rs+4\binom{s}{2}}. \nonumber
\end{align}
\end{proposition}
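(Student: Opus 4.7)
The plan is to apply the $g$-shift formula (\ref{equation:g-shift}) of Proposition \ref{theorem:g-shift} with the substitutions $a=1$, $b=2$, $c=4$, $x=-q$, $y=-q^{4}$, in complete parallel with the derivations of Propositions \ref{proposition:prop1}--\ref{proposition:prop3}. Since $-x=q$ and $-y=q^{4}$ are both monomials with positive sign, no alternating factors $(-1)^{r}$ or $(-1)^{s}$ survive in the resulting boundary sums, so the cancellation identity (\ref{equation:revSum}) invoked in the proofs of Propositions \ref{proposition:prop1} and \ref{proposition:prop3} will play no role here. Exponent arithmetic immediately produces the shifted $g$-term on the top line of (\ref{equation:prop4eq}) and the double boundary sum on the last line, once one collects $(-x)^{r}q^{\binom{r}{2}}=q^{\binom{r+1}{2}}$ and $(-y)^{s}q^{4\binom{s}{2}}=q^{2s^{2}+2s}$.

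The substantive step is to collapse each of the two $\sg$-weighted inner series to a finite sum. For the first correction the inner summand carries exponent $2s(s+1+r)$, so after splitting by the sign of $s$ (substituting $s\mapsto -s$ in the piece with $s<0$) the inner sum equals $\sum_{s\ge 0}q^{2s(s+1+r)}-\sum_{s\ge 1}q^{2s(s-1-r)}$. The key manipulation is to reindex the first series by $s\mapsto s+r+1$, which rewrites it as $\sum_{s\ge r+1}q^{2s(s-1-r)}$; the sum convention (\ref{equation:sumconvention}) then collapses the two series to $-\sum_{s=1}^{r}q^{2s(s-1-r)}$, the required correction.

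For the second correction the inner summand carries exponent $r(r+1+4s)/2$, and the same tactic, reindexing the nonnegative piece by $r\mapsto r+4s+1$, yields the finite tail $-\sum_{r=1}^{4s}q^{r(r-1-4s)/2}$. Assembling the four contributions gives (\ref{equation:prop4eq}). I anticipate the main obstacle to be purely clerical: one must carefully collect exponents after substitution and, more importantly, ensure that the sum convention (\ref{equation:sumconvention}) is invoked correctly when the reindexed range becomes empty (for example when $r=0$ in the first correction, where $\sum_{s=1}^{r}$ vanishes) or reversed (which will occur in the subsequent applications of this proposition needed to derive identity (\ref{equation:Thm1pt1A}), where $\ell$ or $k$ is taken negative).
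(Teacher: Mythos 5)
Your proposal is correct and follows essentially the same route as the paper: specialize (\ref{equation:g-shift}) with $a=1$, $b=2$, $c=4$, $x=-q$, $y=-q^4$, split each $\sg$-weighted bilateral sum by the sign of the index, and reindex the nonnegative piece ($s\mapsto s+r+1$, respectively $r\mapsto r+4s+1$) so that it telescopes against the negative piece to leave the finite tails $-\sum_{s=1}^{r}$ and $-\sum_{r=1}^{4s}$. Your observation that (\ref{equation:revSum}) is not needed here, unlike in Propositions \ref{proposition:prop1} and \ref{proposition:prop3}, matches the paper's proof exactly.
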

\begin{proof}[Proof of Proposition \ref{proposition:prop4}]
We take the appropriate specialization of (\ref{equation:g-shift}) to have
{\allowdisplaybreaks \begin{align*}
g_{1,2,4}&(-q,-q^4,q)-q^{\ell}q^{4k}q^{\binom{\ell}{2}+2\ell k+ 4\binom{k}{2}}g_{1,2,4}(-q^{\ell+2k+1},-q^{2\ell + 4k+4},q)\\
&=\sum_{r=0}^{\ell-1}q^{\binom{r+1}{2}}\sum_{s\in\mathbb{Z}}\sg(s)q^{2s^2+2s+2rs}
 +\sum_{s=0}^{k-1}q^{2s^2+2s}\sum_{r\in\mathbb{Z}}\sg(r)q^{r(r+1+4s)/2}\\
&\ \ \ \ \ -2\sum_{r=0}^{\ell-1}\sum_{s=0}^{k-1}q^{r+4s}q^{\binom{r}{2}+2rs+4\binom{s}{2}}\\
&=\sum_{r=0}^{\ell-1}q^{\binom{r+1}{2}}( \sum_{s=0}^{\infty}q^{2s(s+1+r)}
-\sum_{s=1}^{\infty}q^{2s(s-1-r)})\\
& \ \ \ \ \  +\sum_{s=0}^{k-1}q^{2s^2+2s}(\sum_{r=0}^{\infty}q^{r(r+1+4s)/2}
-\sum_{r=1}^{\infty}q^{r(r-1-4s)/2})\\
&\ \ \ \ \ -2\sum_{r=0}^{\ell-1}\sum_{s=0}^{k-1}q^{r+4s}q^{\binom{r}{2}+2rs+4\binom{s}{2}}\\
&= -\sum_{r=0}^{\ell-1}q^{\binom{r+1}{2}}\sum_{s=1}^{r}q^{2s(s-1-r)}
-\sum_{s=0}^{k-1}q^{2s^2+2s}\sum_{r=1}^{4s}q^{r(r-1-4s)/2}\nonumber \\
& \ \ \ \ \ -2\sum_{r=0}^{\ell-1}\sum_{s=0}^{k-1}q^{r+4s}q^{\binom{r}{2}+2rs+4\binom{s}{2}}.\qedhere
\end{align*}}%
\end{proof}
\begin{proof}[Proof of Identity ( \ref{equation:Thm1pt1A})]  The specializations $(\ell,k)=(0,1)$ and $(\ell,k)=(2,0)$ of (\ref{equation:prop4eq}) read
{\allowdisplaybreaks \begin{align}
g_{1,2,4}(-q,-q^4,q)&=q^4g_{1,2,4}(-q^3,-q^8,q)\label{equation:Thm1pt1A-pre1},\\
g_{1,2,4}(-q,-q^4,q)&=q^3g_{1,2,4}(-q^3,-q^8,q)-q^{-1}\label{equation:Thm1pt1A-pre2}.
\end{align}}%
We multiply (\ref{equation:Thm1pt1A-pre2}) by $q$ and subtract the result from ( \ref{equation:Thm1pt1A-pre1}) to obtain the result.
\end{proof}


\subsection{Proof of Identity (\ref{equation:Thm1pt1B})} We start with a proposition:
\begin{proposition}\label{proposition:prop5}
\begin{align}
g_{1,2,4}(q,-q^4,q)& = 
(-q)^{\ell}q^{4k}q^{\binom{\ell}{2}+2\ell k+ 4\binom{k}{2}}g_{1,2,4}(q^{\ell+2k+1},-q^{2\ell + 4k+4},q)\label{equation:prop5eq}\\
& \ \ \ \ \ -\sum_{r=0}^{\ell-1}(-1)^{r}q^{\binom{r+1}{2}}\sum_{s=1}^{r}q^{2s(s-1-r)}
 +2\sum_{s=0}^{k-1}q^{2s^2+2s} \sum_{r=0}^{\infty}(-1)^rq^{r(r+1+4s)/2}
\nonumber\\
& \ \ \ \ \ -2\sum_{r=0}^{\ell-1}\sum_{s=0}^{k-1}(-1)^{r}q^{r}q^{4s}q^{\binom{r}{2}+2rs+4\binom{s}{2}}. \nonumber
\end{align}
\end{proposition}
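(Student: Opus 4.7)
The plan is to mimic the proofs of Propositions \ref{proposition:prop1}, \ref{proposition:prop3}, and \ref{proposition:prop4}: specialize Proposition \ref{theorem:g-shift} at $(a,b,c,x,y)=(1,2,4,q,-q^4)$ and then simplify the three ``correction'' terms (two one-dimensional sums and one finite double sum). The double sum and the leading shifted $g_{1,2,4}$-term on the right-hand side of (\ref{equation:prop5eq}) will appear unchanged from the direct substitution, so no work is required there beyond collecting powers of $q$ and signs.

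For the outer sum over $r$ (the ``$s$-theta'' piece), with $-y=q^4$, $b=2$, $c=4$ the inner sum becomes
\begin{equation*}
\sum_{s\in\mathbb{Z}}\sg(s)q^{2s^2+2s+2rs}=\sum_{s=0}^{\infty}q^{2s(s+1+r)}-\sum_{s=1}^{\infty}q^{2s(s-1-r)}.
\end{equation*}
Splitting the second sum as $\sum_{s=1}^{r}+\sum_{s=r+1}^{\infty}$ and shifting $s\mapsto s+r+1$ in the tail produces a copy of $\sum_{s=0}^{\infty}q^{2s(s+1+r)}$, which cancels the first sum and leaves $-\sum_{s=1}^{r}q^{2s(s-1-r)}$. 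Multiplying by the outer $(-1)^{r}q^{\binom{r+1}{2}}$ (which comes from $(-x)^{r}q^{\binom{r}{2}}$ with $x=q$) yields exactly the first correction in (\ref{equation:prop5eq}).

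For the outer sum over $s$ (the ``$r$-theta'' piece), with $-x=-q$, $a=1$, $b=2$ the inner sum is
\begin{equation*}
\sum_{r\in\mathbb{Z}}\sg(r)(-1)^{r}q^{r(r+1+4s)/2}=\sum_{r=0}^{\infty}(-1)^{r}q^{r(r+1+4s)/2}-\sum_{r=1}^{\infty}(-1)^{r}q^{r(r-1-4s)/2}.
\end{equation*}
Now split the second sum as $\sum_{r=1}^{4s}+\sum_{r=4s+1}^{\infty}$. Exactly as in (\ref{equation:revSum}), the substitution $r\mapsto 4s+1-r$ preserves the quadratic exponent $r(r-1-4s)/2$ but flips the sign $(-1)^{r}$, so the finite middle chunk vanishes. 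Shifting $r\mapsto r+4s+1$ in the tail contributes an overall sign $(-1)^{4s+1}=-1$ and turns the exponent into $r(r+4s+1)/2$, producing $-\sum_{r=0}^{\infty}(-1)^{r}q^{r(r+1+4s)/2}$. Subtracting from the first piece gives the desired factor of $2$.

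The main obstacle is purely bookkeeping: matching the sign of $(-y)^s$ (note $-y=q^4$ so no $(-1)^s$ appears, in contrast to the $r$-sum where $-x=-q$ does produce $(-1)^r$) and pairing every shift with its correct sign. Once these are handled, assembling the three pieces with the doubled finite sum $-2\sum_{r=0}^{\ell-1}\sum_{s=0}^{k-1}(-1)^{r}q^{r+4s}q^{\binom{r}{2}+2rs+4\binom{s}{2}}$ completes (\ref{equation:prop5eq}).
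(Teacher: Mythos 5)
Your proposal is correct and follows essentially the same route as the paper: specialize Proposition \ref{theorem:g-shift} at $(a,b,c,x,y)=(1,2,4,q,-q^4)$, cancel the two infinite tails in the $s$-inner sum to leave the finite sum $-\sum_{s=1}^{r}q^{2s(s-1-r)}$, and obtain the factor of $2$ in the $r$-inner sum by splitting off $\sum_{r=1}^{4s}$, which vanishes by the reversal $r\mapsto 4s+1-r$ exactly as in (\ref{equation:revSum}). All sign and exponent bookkeeping in your write-up checks out.
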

\begin{proof}[Proof of Proposition \ref{proposition:prop5}]
We take the appropriate specialization of (\ref{equation:g-shift}) to have
{\allowdisplaybreaks \begin{align*}
g_{1,2,4}&(q,-q^4,q) - 
(-q)^{\ell}q^{4k}q^{\binom{\ell}{2}+2\ell k+ 4\binom{k}{2}}g_{1,2,4}(q^{\ell+2k+1},-q^{2\ell + 4k+4},q)\\
&=\sum_{r=0}^{\ell-1}(-1)^{r}q^{\binom{r+1}{2}}\Big [ \sum_{s=0}^{\infty}q^{2s(s+1+r)}
-\sum_{s=1}^{\infty}q^{2s(s-1-r)}\Big ] \\
& \ \ \ \ \ +\sum_{s=0}^{k-1}q^{2s^2+2s}\Big [ \sum_{r=0}^{\infty}(-1)^rq^{r(r+1+4s)/2}
-\sum_{r=1}^{\infty}(-1)^rq^{r(r-1-4s)/2}\Big ]\\
& \ \ \ \ \ -2\sum_{r=0}^{\ell-1}\sum_{s=0}^{k-1}(-1)^{r}q^{r}q^{4s}q^{\binom{r}{2}+2rs+4\binom{s}{2}}\\
&=-\sum_{r=0}^{\ell-1}(-1)^{r}q^{\binom{r+1}{2}}\sum_{s=1}^{r}q^{2s(s-1-r)}\\
& \ \ \ \ \ +\sum_{s=0}^{k-1}q^{2s^2+2s}\Big [ 2\sum_{r=0}^{\infty}(-1)^rq^{r(r+1+4s)/2}
-\sum_{r=1}^{4s}(-1)^rq^{r(r-1-4s)/2}\Big ]\\
& \ \ \ \ \ -2\sum_{r=0}^{\ell-1}\sum_{s=0}^{k-1}(-1)^{r}q^{r}q^{4s}q^{\binom{r}{2}+2rs+4\binom{s}{2}}\\
&=-\sum_{r=0}^{\ell-1}(-1)^{r}q^{\binom{r+1}{2}}\sum_{s=1}^{r}q^{2s(s-1-r)}
+2\sum_{s=0}^{k-1}q^{2s^2+2s}\sum_{r=0}^{\infty}(-1)^rq^{r(r+1+4s)/2}\\
& \ \ \ \ \ -2\sum_{r=0}^{\ell-1}\sum_{s=0}^{k-1}(-1)^{r}q^{r}q^{4s}q^{\binom{r}{2}+2rs+4\binom{s}{2}},
\end{align*}}%
where for the last equality we argued as in (\ref{equation:revSum}).
\end{proof}
\begin{proof}[Proof of Identity (\ref{equation:Thm1pt1B})]  The specializations $(\ell,k)=(0,1)$ and $(\ell,k)=(2,0)$ of (\ref{equation:prop5eq}) read:
{\allowdisplaybreaks \begin{align}
g_{1,2,4}(q,-q^4,q)&=q^4g_{1,2,4}(q^3,-q^8,q)+2\sum_{r=0}^{\infty}(-1)^rq^{r(r+1)/2},\label{equation:Thm1pt1B-pre1}\\
g_{1,2,4}(q,-q^4,q)&=q^3g_{1,2,4}(q^3,-q^8,q)+q^{-1}.\label{equation:Thm1pt1B-pre2}
\end{align}}%
The result follows from multiplying (\ref{equation:Thm1pt1B-pre2}) by $q$ and subtracting the result from (\ref{equation:Thm1pt1B-pre1}).
\end{proof}


\subsection{Proof of Identity (\ref{equation:Thm3pt6})} We begin with a proposition.
\begin{proposition}\label{proposition:prop6}  We have
\begin{align}
g_{6,3,2}(-q^5,-q^3,q) & = q^{5\ell}q^{3k}q^{6\binom{\ell}{2}+3\ell k+ 2\binom{k}{2}}g_{6,3,2}(-q^{6\ell+3k+5},-q^{3\ell + 2k+3},q)\label{equation:prop6eqA}\\
& \ \ \ \ \ -\sum_{r=0}^{\ell-1}q^{3r^3+2r} \sum_{s=1}^{3r+1}q^{s(s-2-3r)}\nonumber \\
& \ \ \ \ \ +\sum_{s=0}^{k-1}q^{s^2+2s}\Big [ \sum_{r=0}^{\infty}q^{r(3r+2+3s)}
-\sum_{r=1}^{\infty}q^{r(3r-2-3s)}\Big ]\nonumber  \\
& \ \ \ \ \ -2\sum_{r=0}^{\ell-1}\sum_{s=0}^{k-1}q^{5r}q^{3s}q^{6\binom{r}{2}+3rs+2\binom{s}{2}}. \nonumber
\end{align}
\end{proposition}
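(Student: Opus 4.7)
The plan is to apply Proposition \ref{theorem:g-shift} with the specialization $(a,b,c)=(6,3,2)$, $x=-q^5$, $y=-q^3$, and then simplify the four resulting pieces. Direct substitution of $(-x)^{\ell}=q^{5\ell}$ and $(-y)^k=q^{3k}$ into the leading term yields $q^{5\ell}q^{3k}q^{6\binom{\ell}{2}+3\ell k+2\binom{k}{2}}g_{6,3,2}(-q^{6\ell+3k+5},-q^{3\ell+2k+3},q)$, matching the statement. The pure double sum becomes $-2\sum_{r=0}^{\ell-1}\sum_{s=0}^{k-1}q^{5r}q^{3s}q^{6\binom{r}{2}+3rs+2\binom{s}{2}}$ verbatim.

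For the two signed single sums, I would split each inner series over $\mathbb{Z}$ into an $s\ge 0$ part and an $s<0$ part, using $\sg(s)=\pm 1$, and then substitute $s\mapsto -s$ in the negative half to rewrite both pieces as sums over nonnegative indices. For the $r$-indexed outer sum this produces a prefactor $q^{3r^2+2r}$ together with the inner expression
\[
\sum_{s=0}^{\infty}q^{s(s+2+3r)}-\sum_{s=1}^{\infty}q^{s(s-2-3r)}.
\]
The key collapse is the shift $s\mapsto s+(3r+2)$ in the first inner sum, which exhibits $\sum_{s=0}^{\infty}q^{s(s+2+3r)}=\sum_{s=3r+2}^{\infty}q^{s(s-2-3r)}$, so the difference reduces via the summation convention (\ref{equation:sumconvention}) to the finite expression $-\sum_{s=1}^{3r+1}q^{s(s-2-3r)}$. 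This is exactly the trick already used in Propositions \ref{proposition:prop1}--\ref{proposition:prop5}.

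For the $s$-indexed outer sum the analogous splitting produces prefactor $q^{s^2+2s}$ and inner expression $\sum_{r=0}^{\infty}q^{r(3r+2+3s)}-\sum_{r=1}^{\infty}q^{r(3r-2-3s)}$. Here the quadratic coefficient $3r^2$ forces the two sequences of exponents to differ, so no shift collapse is available; this partial-theta pair is simply recorded as written in the statement.

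The argument is essentially direct specialization and bookkeeping, with no real obstacle. The only subtlety is sign accounting: because both $x=-q^5$ and $y=-q^3$ are already negative, the factors $(-x)^r=q^{5r}$ and $(-y)^s=q^{3s}$ contribute no alternating signs, and consequently the cancellation identity (\ref{equation:revSum}) used in Propositions \ref{proposition:prop3} and \ref{proposition:prop5} does not apply here---the $s$-indexed partial-theta pair cannot be folded into a $2\sum_{r=0}^{\infty}$ form, which explains why the final expression retains both an $\sum_{r=0}^{\infty}$ piece and an $\sum_{r=1}^{\infty}$ piece.
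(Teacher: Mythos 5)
Your proposal is correct and follows essentially the same route as the paper: specialize Proposition \ref{theorem:g-shift} to $(a,b,c)=(6,3,2)$, $x=-q^5$, $y=-q^3$, split the two bilateral inner sums by the sign of the index, collapse the $r$-indexed piece via the shift $s\mapsto s+3r+2$ to the finite sum $-\sum_{s=1}^{3r+1}q^{s(s-2-3r)}$, and leave the $s$-indexed partial-theta pair uncollapsed. You also correctly identify $q^{3r^2+2r}$ as the prefactor (the $q^{3r^3+2r}$ in the stated proposition is a typo) and correctly observe that the absence of alternating signs is what blocks the cancellation of (\ref{equation:revSum}) here.
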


\begin{proof}[Proof of Proposition \ref{proposition:prop6}]
We take the appropriate specialization of (\ref{equation:g-shift}) to have
{\allowdisplaybreaks \begin{align*}
g_{6,3,2}&(-q^5,-q^3,q) - q^{5\ell}q^{3k}q^{6\binom{\ell}{2}+3\ell k+ 2\binom{k}{2}}g_{6,3,2}(-q^{6\ell+3k+5},-q^{3\ell + 2k+3},q)\\
&=\sum_{r=0}^{\ell-1}q^{3r^3+2r}\Big [ \sum_{s=0}^{\infty}q^{s(s+2+3r)}
- \sum_{s=1}^{\infty}q^{s(s-2-3r)}\Big ]\\
&\ \ \ \ \ +\sum_{s=0}^{k-1}q^{s^2+2s}\Big [ \sum_{r=0}^{\infty}q^{r(3r+2+3s)}
-\sum_{r=1}^{\infty}q^{r(3r-2-3s)}\Big ]\\
& \ \ \ \ \ -2\sum_{r=0}^{\ell-1}\sum_{s=0}^{k-1}q^{5r}q^{3s}q^{6\binom{r}{2}+3rs+2\binom{s}{2}}\\
&=-\sum_{r=0}^{\ell-1}q^{3r^3+2r} \sum_{s=1}^{3r+1}q^{s(s-2-3r)} 
 +\sum_{s=0}^{k-1}q^{s^2+2s}\Big [ \sum_{r=0}^{\infty}q^{r(3r+2+3s)}
-\sum_{r=1}^{\infty}q^{r(3r-2-3s)}\Big ]  \\
& \ \ \ \ \ -2\sum_{r=0}^{\ell-1}\sum_{s=0}^{k-1}q^{5r}q^{3s}q^{6\binom{r}{2}+3rs+2\binom{s}{2}}.\qedhere
\end{align*}}%
\end{proof}
Instead of proving (\ref{equation:Thm3pt6}), we will prove an equivalent form found in the following lemma.
\begin{lemma}\label{lemma:Thm3pt6}  We have that 
\begin{equation}
G(q)=q^{-1}-\sum_{n=0}^{\infty}q^{3n^2+2n-1}+\sum_{n=0}^{\infty}q^{3n^2+4n}
 -2\sum_{n=0}^{\infty}q^{3n^2+5n+1}(1-q^{2n+2}).\label{equation:newThm3pt6}
\end{equation}
\end{lemma}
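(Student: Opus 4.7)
The plan is to apply Proposition \ref{proposition:prop6} to $g_{6,3,2}(-q^5,-q^3,q)$ with a carefully chosen pair $(\ell,k)$ so that the right-hand $g$ function that appears coincides, up to the flip identity (\ref{equation:g-flip}), with the second term $q g_{6,3,2}(-q^7,-q^4,q)$ in the definition of $G(q)$. Once this is arranged, the two $g$'s cancel and $G(q)$ is given explicitly by the three correction terms from (\ref{equation:prop6eqA}). This is the same template used for Identities (\ref{equation:Prop3pt4A}), (\ref{equation:Prop3pt4B}), (\ref{equation:Thm1pt1A}), (\ref{equation:Thm1pt1B}), but here the cancellation happens against the companion $g$ that already appears in $G(q)$ rather than between two specializations.

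To find the correct $(\ell,k)$, note that Proposition \ref{proposition:g-flip} yields
$$g_{6,3,2}(-q^7,-q^4,q) = \frac{q^{11}}{(-q^7)(-q^4)}\, g_{6,3,2}(-q^8,-q^3,q) = g_{6,3,2}(-q^8,-q^3,q).$$
So I want the shifted arguments in (\ref{equation:prop6eqA}) to be $-q^8$ and $-q^3$, i.e.\ to solve $6\ell+3k+5=8$ and $3\ell+2k+3=3$. This forces $(\ell,k)=(2,-3)$. A direct check shows that the prefactor $q^{5\ell+3k+6\binom{\ell}{2}+3\ell k+2\binom{k}{2}}$ then equals $q^{10-9+6-18+12}=q$, so Proposition \ref{proposition:prop6} reduces to
$$g_{6,3,2}(-q^5,-q^3,q) = q\, g_{6,3,2}(-q^7,-q^4,q) + C(q),$$
and therefore $G(q)=C(q)$.

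The remaining step is to simplify $C(q)$ and match it to the right-hand side of (\ref{equation:newThm3pt6}). The first correction is a finite polynomial over $r\in\{0,1\}$. For the second correction, the sum convention (\ref{equation:sumconvention}) turns $\sum_{s=0}^{-4}$ into $-\sum_{s=-3}^{-1}$; for each $s\in\{-3,-2,-1\}$, the substitution $r\mapsto -r$ in $\sum_{r=1}^{\infty} q^{r(3r-2-3s)}$ rewrites that series as $\sum_{r=-\infty}^{-1} q^{r(3r+2+3s)}$, so the bracketed difference collapses into the genuine false theta function
$$\sum_{r\in\mathbb{Z}}\sg(r)\, q^{r(3r+2+3s)}.$$
The third correction is another finite polynomial, again after applying the same sign convention for $\sum_{s=0}^{-4}$. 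Summing the three pieces and reindexing will yield the four terms on the right-hand side.

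The main obstacle is the bookkeeping when $k=-3$: one must handle the convention (\ref{equation:sumconvention}) carefully, especially in the second and third corrections, and then reindex the bilateral sums $\sum_{r\in\mathbb{Z}}\sg(r)q^{r(3r+2+3s)}$ for $s\in\{-3,-2,-1\}$ to bring the exponents into the canonical forms $3n^2+2n-1$, $3n^2+4n$, and $3n^2+5n+1$ appearing in (\ref{equation:newThm3pt6}); the factor $(1-q^{2n+2})$ in the last sum should drop out naturally as the difference of two bilateral sums shifted in $s$. Once the bilateral sums are reindexed and combined with the finite polynomial pieces (which account for the leading $q^{-1}$), the identity follows by direct matching of coefficients.
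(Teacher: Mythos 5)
Your proposal is correct and follows essentially the same route as the paper: apply Proposition \ref{proposition:prop6} with $(\ell,k)=(2,-3)$ to $g_{6,3,2}(-q^5,-q^3,q)$ and the flip identity (\ref{equation:g-flip}) to $qg_{6,3,2}(-q^7,-q^4,q)$ so that both reduce to $qg_{6,3,2}(-q^8,-q^3,q)$ and cancel, leaving $G(q)$ equal to the correction terms. Your handling of the convention (\ref{equation:sumconvention}) for $\sum_{s=0}^{-4}$ and the reindexing of the bilateral sums matches the paper's computation, which carries out the same coefficient bookkeeping you outline.
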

\begin{proof}[Proof of Lemma \ref{lemma:Thm3pt6}]
We begin with
\begin{equation*}
q^4G(q^3)=q-\sum_{n=1}^{\infty}\Big ( \frac{n}{3}\Big )q^{n^2}-2\sum_{n=0}^{\infty}q^{9n^2+15n+7}(1-q^{6n+6}). 
\end{equation*}
We divide out by $q^4$ to have
\begin{equation*}
G(q^3)=q^{-3}-\sum_{n=1}^{\infty}\Big ( \frac{n}{3}\Big )q^{n^2-4}-2\sum_{n=0}^{\infty}q^{9n^2+15n+3}(1-q^{6n+6}). 
\end{equation*}
We expand the Legendre symbol to have
\begin{align*}
G(q^3)&=q^{-3}-\sum_{n=0}^{\infty}q^{(3n+1)^2-4}+\sum_{n=0}^{\infty}q^{(3n+2)^2-4}
 -2\sum_{n=0}^{\infty}q^{9n^2+15n+3}(1-q^{6n+6}),\\
 &=q^{-3}-\sum_{n=0}^{\infty}q^{9n^2+6n-3}+\sum_{n=0}^{\infty}q^{9n^2+12n}
 -2\sum_{n=0}^{\infty}q^{9n^2+15n+3}(1-q^{6n+6}),
\end{align*}
and then make the substitution $q\rightarrow q^{1/3}$.
\end{proof}

\begin{proof}[Proof of Identity (\ref{equation:newThm3pt6})] We apply Proposition \ref{proposition:prop6}  with $(\ell,k)=(2,-3)$ to the first term and apply (\ref{equation:g-flip}) to the second term to have
{\allowdisplaybreaks \begin{align*}
g_{6,3,2}&(-q^5,-q^3,q) - qg_{6,3,2}(-q^{7},-q^{4},q)\\
& =qg_{6,3,2}(-q^{8},-q^{3},q)- qg_{6,3,2}(-q^{8},-q^{3},q)
 -\sum_{r=0}^{1}q^{3r^3+2r} \sum_{s=1}^{3r+1}q^{s(s-2-3r)} \\
& \ \ \ \ \ +\sum_{s=0}^{-4}q^{s^2+2s}\Big [ \sum_{r=0}^{\infty}q^{r(3r+2+3s)}
-\sum_{r=1}^{\infty}q^{r(3r-2-3s)}\Big ]
 -2\sum_{r=0}^{1}\sum_{s=0}^{-4}q^{5r}q^{3s}q^{6\binom{r}{2}+3rs+2\binom{s}{2}}\\
& = 
  -\sum_{s=1}^{1}q^{s(s-2)} 
 -q^{5} \sum_{s=1}^{4}q^{s(s-5)}\\
& \ \ \ \ \ -\sum_{s=-3}^{-1}q^{s^2+2s}\Big [ \sum_{r=0}^{\infty}q^{r(3r+2+3s)}
-\sum_{r=1}^{\infty}q^{r(3r-2-3s)}\Big ]
 +2\sum_{r=0}^{1}\sum_{s=-3}^{-1}q^{5r}q^{3s}q^{6\binom{r}{2}+3rs+2\binom{s}{2}}\\
& =
  2q^3+2+3q^{-1}- \sum_{r=0}^{\infty}q^{r(3r-7)+3}
+	\sum_{r=1}^{\infty}q^{r(3r+7)+3} \\
& \ \ \ \ \ - \sum_{r=0}^{\infty}q^{r(3r-4)}
+\sum_{r=1}^{\infty}q^{r(3r+4)}
 -\sum_{r=0}^{\infty}q^{r(3r-1)-1}
+\sum_{r=1}^{\infty}q^{r(3r+1)-1}\\
& = 
    q^{-1}-\sum_{r=2}^{\infty}q^{r(3r-7)+3}
+	\sum_{r=0}^{\infty}q^{r(3r+7)+3}\\
& \ \ \ \ \ -\sum_{r=1}^{\infty}q^{r(3r-4)}
+\sum_{r=0}^{\infty}q^{r(3r+4)}
 -\sum_{r=1}^{\infty}q^{r(3r-1)-1}
+\sum_{r=1}^{\infty}q^{r(3r+1)-1}\\
& =  q^{-1}
 -\sum_{r=0}^{\infty}q^{3r^2+2r-1}+\sum_{r=0}^{\infty}q^{r(3r+4)}
-2\sum_{r=0}^{\infty}q^{3r^2+5r+1}(1-q^{2r+2}).\qedhere
\end{align*} }%
\end{proof}

\section{False theta function identities of Andrews and Warnaar}\label{section:andrewsWarnaar}
In addition to the false theta function identities mentioned in the Introduction, Andrews and Warnaar showed
\cite[$(1.5)$, Theorem $13$, $(1.7)$]{AW}
{\allowdisplaybreaks \begin{align}
\sum_{n=0}^{\infty}\frac{q^n}{(-q;q^2)_{n+1}}
&=\sum_{n=0}^{\infty}(-1)^{n}q^{2n(3n+2)}(1+q^{4n+2}),\\
\sum_{n,j=0}^{\infty}\frac{q^{j(2j+1)+n}(q^2;q^2)_{n+j}}{(-q;q)_{2n+2j+1}(q^2;q^2)_j(q^2;q^2)_{n}}
&=\sum_{n=0}(-1)^{n}q^{n(5n+3)}(1+q^{4n+2}),\\
\sum_{n=0}^{\infty}\frac{(-q^4;q^4)_{n-1}q^n}{(-q^2;q^2)_{n-1}(-q;q^2)_{n+1}}
&=\frac{1}{1+q^2}\Big ( 1+2q^2\sum_{n=0}^{\infty}(-1)^nq^{4n(n+1)}\Big ), 
\end{align}}%
where a typo has been corrected.  Intermediate steps of their proofs include respectively
{\allowdisplaybreaks \begin{align}
\sum_{n=0}^{\infty}\frac{q^n}{(-q;q^2)_{n+1}}&
=g_{3,1,3}(q^6,q^{10},q^4)+q^2g_{3,1,3}(q^{10},q^{14},q^4),\\
\sum_{n,j=0}^{\infty}\frac{q^{j(2j+1)+n}(q^2;q^2)_{n+j}}{(-q;q)_{2n+2j+1}(q^2;q^2)_j(q^2;q^2)_{n}}
&=g_{5,3,5}(q^6,q^{8},q^2)+q^2g_{5,3,5}(q^{10},q^{12},q^2),\\
\sum_{n=0}^{\infty}\frac{(-q^4;q^4)_{n-1}q^n}{(-q^2;q^2)_{n-1}(-q;q^2)_{n+1}}
&=g_{1,1,1}(q^{4},q^{8},q^{8})+q^{2}g_{1,1,1}(q^{8},q^{12},q^{8}).
\end{align}}%

With Andrews and Warnaar's identities from the Introduction in mind, we will give new proofs that
{\allowdisplaybreaks \begin{align}
g_{1,3,1}(q,q^2,q)&=\sum_{n=0}^{\infty}(-1)^nq^{n(n+1)/2}\label{equation:AW-1pt1a-g-form},\\
g_{3,1,3}(q^2,q^3,q)&=\sum_{n=0}^{\infty}(-1)^nq^{3n(n+1)/2}\label{equation:AW-1pt1c-g-form},\\
g_{1,0,1}(q^2,q^4,q^4)&=\sum_{n=0}^{\infty}(-1)^nq^{2n(n+1)}\label{equation:AW-1pt1d-g-form}.
\end{align}}%
With the identities mentioned in this section in mind, we will give new proofs that
{\allowdisplaybreaks \begin{align}
g_{3,1,3}(q^6,q^{10},q^4)+q^2g_{3,1,3}(q^{10},q^{14},q^4)
&=\sum_{n=0}^{\infty}(-1)^{n}q^{2n(3n+2)}(1+q^{4n+2}),\label{equation:AW-1pt5-g-form}\\
g_{5,3,5}(q^6,q^{8},q^2)+q^2g_{5,3,5}(q^{10},q^{12},q^2)
&=\sum_{n=0}^{\infty}(-1)^{n}q^{n(5n+3)}(1+q^{4n+2}),\label{equation:AW-Thm13-g-form}\\
g_{1,1,1}(q^{4},q^{8},q^{8})+q^{2}g_{1,1,1}(q^{12},q^{8},q^{8})
&=\frac{1}{1+q^2}\Big ( 1+2q^2\sum_{n=0}^{\infty}(-1)^nq^{4n(n+1)}\Big ).\label{equation:AW-Thm14-g-form}
\end{align}}%

\begin{proof}[Proof of Identity (\ref{equation:AW-1pt1a-g-form})]
Let us take (\ref{equation:g-shift}) with $(\ell,k)=(0,1)$.  We have
{\allowdisplaybreaks \begin{align*}
g_{1,3,1}(q,q^2,q)&=-q^2g_{1,3,1}(q^4,q^3,q)+\sum_{r\in\mathbb{Z}}\sg(r)(-q)^{r}q^{\binom{r}{2}}\\
&=-q^2g_{1,3,1}(q^4,q^3,q)+\sum_{r=0}^{\infty}(-1)^rq^{\binom{r+1}{2}}
-\sum_{r=1}^{\infty}(-1)^rq^{\binom{r}{2}}\\
&=-q^2g_{1,3,1}(q^4,q^3,q)+2\sum_{r=0}^{\infty}(-1)^rq^{\binom{r+1}{2}}\\
&=-g_{1,3,1}(q,q^2,q)+2\sum_{r=0}^{\infty}(-1)^rq^{\binom{r+1}{2}},
\end{align*}}%
where we have used (\ref{equation:g-flip}).  Comparing the extremes of the above derivation, we have
\begin{equation*}
g_{1,3,1}(q,q^2,q)=-g_{1,3,1}(q,q^2,q)+2\sum_{r=0}^{\infty}(-q)^{r}q^{\binom{r}{2}},
\end{equation*}
and the result follows.
\end{proof}

\begin{proof}[Proof of Identity (\ref{equation:AW-1pt1c-g-form})]
Let us take (\ref{equation:g-shift}) with $(\ell,k)=(1,0)$.  We have
{\allowdisplaybreaks \begin{align*}
g_{3,1,3}(q^2,q^3,q)&=-q^2g_{3,1,3}(q^5,q^4,q)
+\sum_{s\in\mathbb{Z}}\sg(s)(-1)^{s}q^{3s}q^{3\binom{s}{2}}\\
&=-q^2g_{3,1,3}(q^5,q^4,q)
+\sum_{s=0}^{\infty}(-1)^{s}q^{3\binom{s+1}{2}}
-\sum_{s=1}^{\infty}(-1)^{s}q^{3\binom{s}{2}}\\
&=-q^2g_{3,1,3}(q^5,q^4,q)
+2\sum_{s=0}^{\infty}(-1)^{s}q^{3\binom{s+1}{2}}\\
&=-g_{3,1,3}(q^2,q^3,q)
+2\sum_{s=0}^{\infty}(-1)^{s}q^{3\binom{s+1}{2}},
\end{align*}}%
where we have used (\ref{equation:g-flip}).  The result follows.
\end{proof}

\begin{proof}[Proof of Identity (\ref{equation:AW-1pt1d-g-form})]
Let us take (\ref{equation:g-shift}) with $(\ell,k)=(1,0)$.  We have
{\allowdisplaybreaks \begin{align*}
g_{1,0,1}(q^2,q^4,q^4)&=-q^2g_{1,0,1}(q^6,q^4,q^4)
+\sum_{s\in\mathbb{Z}}\sg(s)(-1)^{s}q^{4s}q^{4\binom{s}{2}}\\
&=-q^2g_{1,0,1}(q^6,q^4,q^4)
+\sum_{s=0}^{\infty}(-1)^{s}q^{2s(s+1)}
-\sum_{s=1}^{\infty}(-1)^{s}q^{2s(s-1)}\\
&=-q^2g_{1,0,1}(q^6,q^4,q^4)
+2\sum_{s=0}^{\infty}(-1)^{s}q^{2s(s+1)}\\
&=-g_{1,0,1}(q^2,q^4,q^4)
+2\sum_{s=0}^{\infty}(-1)^{s}q^{2s(s+1)},
\end{align*}}%
where we have used (\ref{equation:g-flip}).  The result follows.
\end{proof}

\begin{proof}[Proof of Identity (\ref{equation:AW-1pt5-g-form})]
Let us take (\ref{equation:g-shift}) with $(\ell,k)=(1,0)$.  We have
{\allowdisplaybreaks \begin{align*}
g_{3,1,3}(q^{6},q^{10},q^{4})
&=-q^6g_{3,1,3}(q^{18},q^{14},q^{4})+\sum_{s\in\mathbb{Z}}\sg(s)(-1)^{s}q^{10s}q^{12\binom{s}{2}}\\
&=-q^6g_{3,1,3}(q^{18},q^{14},q^{4})+\sum_{s=0}^{\infty}(-1)^{s}q^{6s^2+4s}
-\sum_{s=1}^{\infty}(-1)^{s}q^{6s^2-4s}\\
&=-q^6g_{3,1,3}(q^{18},q^{14},q^{4})+\sum_{s=0}^{\infty}(-1)^{s}q^{6s^2+4s}(1+q^{4s+2})\\
&=-q^{2}g_{3,1,3}(q^{10},q^{14},q^{4})+\sum_{s=0}^{\infty}(-1)^{s}q^{6s^2+4s}(1+q^{4s+2}),
\end{align*}}%
where we have used (\ref{equation:g-flip}).  The result follows. 
\end{proof}

\begin{proof}[Proof of Identity (\ref{equation:AW-Thm13-g-form})]
Let us take (\ref{equation:g-shift}) with $(\ell,k)=(1,0)$.  We have
{\allowdisplaybreaks \begin{align*}
g_{5,3,5}(q^{6},q^{8},q^{2})&=-q^{6}g_{5,3,5}(q^{16},q^{14},q^{2})
+\sum_{s\in\mathbb{Z}}\sg(s)(-1)^{s}q^{8s}q^{10\binom{s}{2}}\\
&=-q^{6}g_{5,3,5}(q^{16},q^{14},q^{2})
+\sum_{s=0}^{\infty}(-1)^{s}q^{5s^2+3s}-\sum_{s=1}^{\infty}(-1)^{s}q^{5s^2-3s}\\
&=-q^{6}g_{5,3,5}(q^{16},q^{14},q^{2})
+\sum_{s=0}^{\infty}(-1)^{s}q^{5s^2+3s}(1+q^{4s+2})\\
&=-q^{2}g_{5,3,5}(q^{10},q^{12},q^{2})
+\sum_{s=0}^{\infty}(-1)^{s}q^{5s^2+3s}(1+q^{4s+2}),
\end{align*}}%
where we have used (\ref{equation:g-flip}).  The result follows.
\end{proof}

\begin{proof}[Proof of Identity (\ref{equation:AW-Thm14-g-form})]
Let us take (\ref{equation:g-shift}) with $(\ell,k)=(1,0)$.  We have
{\allowdisplaybreaks \begin{align*}
g_{1,1,1}&(q^{4},q^{8},q^{8})+q^{2}g_{1,1,1}(q^{8},q^{12},q^{8})\\
&=-q^4g_{1,1,1}(q^{12},q^{16},q^{8})-q^{10}g_{1,1,1}(q^{16},q^{20},q^{8})\\
& \ \ \ \ \ +\sum_{s\in\mathbb{Z}}\sg(s)(-1)^{s}q^{8s}q^{8\binom{s}{2}}
+q^2\sum_{s\in\mathbb{Z}}\sg(s)(-1)^{s}q^{12s}q^{8\binom{s}{2}}\\
&=-q^4g_{1,1,1}(q^{12},q^{16},q^{8})-q^{10}g_{1,1,1}(q^{16},q^{20},q^{8})\\
& \ \ \ \ \ +\sum_{s=0}^{\infty}(-1)^{s}q^{4s^2+4s} -\sum_{s=1}^{\infty}(-1)^{s}q^{4s^2-4s}\\
& \ \ \ \ \ +\sum_{s=0}^{\infty}(-1)^{s}q^{4s^2+8s+2} -\sum_{s=1}^{\infty}(-1)^{s}q^{4s^2-8s+2}\\
&=-q^4g_{1,1,1}(q^{12},q^{16},q^{8})-q^{10}g_{1,1,1}(q^{16},q^{20},q^{8})\\
& \ \ \ \ \ +2\sum_{s=0}^{\infty}(-1)^{s}q^{4s^2+4s} 
 +q^2\sum_{s=0}^{\infty}(-1)^{s}q^{4s(s+2)} -q^2\sum_{s=1}^{\infty}(-1)^{s}q^{4s(s-2)}\\
&=-q^4g_{1,1,1}(q^{12},q^{16},q^{8})-q^{10}g_{1,1,1}(q^{16},q^{20},q^{8})
+2\sum_{s=0}^{\infty}(-1)^{s}q^{4s^2+4s}+q^{-2}\\
&=-g_{1,1,1}(q^{12},q^{8},q^{8})-q^{-2}g_{1,1,1}(q^{8},q^{4},q^{8})
+2\sum_{s=0}^{\infty}(-1)^{s}q^{4s^2+4s}+q^{-2},
\end{align*}}%
where we have used (\ref{equation:g-flip}).  Comparing the extremes of the above derivation yields
{\allowdisplaybreaks \begin{align*}
g_{1,1,1}&(q^{4},q^{8},q^{8})+q^{2}g_{1,1,1}(q^{12},q^{8},q^{8})\\
&=-g_{1,1,1}(q^{12},q^{8},q^{8})-q^{-2}g_{1,1,1}(q^{4},q^{8},q^{8})
+2\sum_{s=0}^{\infty}(-1)^{s}q^{4s^2+4s}+q^{-2}
\end{align*}}%
or equivalently
\begin{align*}
 g_{1,1,1}(q^{4},q^{8},q^{8})+q^{2}g_{1,1,1}(q^{12},q^{8},q^{8})
&= \frac{q^2}{1+q^2}\Big ( 2\sum_{s=0}^{\infty}(-1)^{s}q^{4s^2+4s}+q^{-2}\Big )\\
&=\frac{1}{1+q^2}\Big ( 1+2q^2\sum_{s=0}^{\infty}(-1)^{s}q^{4s^2+4s}\Big ).\qedhere
\end{align*}

\end{proof}

\section{Miscellaneous examples}\label{section:signFlips}

We recall the identities
\begin{align}
f_{3,3,1}(q^3,q,q)&=J_{1,4}J_{6,12},\\
qf_{1,3,1}(-q,-q^2,-q)&=J_{1,4}V_0(q),
\end{align}
where the first identity comes from \cite[(0.7)]{P1}, and the second identity comes from $V_0(q)$, which is an eighth order mock theta function \cite{GM}, \cite[(5.41)]{HM}:
\begin{equation}
V_{0}(q):=-1+2\sum_{n\ge 0}\frac{q^{n^2}(-q;q^2)_n}{(q;q^2)_{n}}
=-q^{-1}m(1,q^8,q)-q^{-1}m(1,q^8,q^3).
\end{equation}

We also recall a function from the lost notebook studied by Andrews, Dyson, and Hickerson \cite{ADH} and Cohen \cite{C}:
\begin{align}
\sigma(q)&:=1+\sum_{n=1}^{\infty}\frac{q^{n(n+1)/2}}{(-q;q)_n}
=\sum_{n=0}^{\infty}(-1)^{n+j}q^{n(3n+1)/2}(1-q^{2n+1})\sum_{j=-n}^{n}q^{-j^2}.
\end{align}
A straightforward change of variables yields
\begin{align}
\sigma(q)&=g_{1,5,1}(-q,-q,q)-q^2g_{1,5,1}(-q^4,-q^4,q),\\
&=g_{1,3,3}(-q,q^2,q)-qg_{1,3,3}(-q^3,q^4,q).
\end{align}
A function with similar properties was studied by Corson, et al \cite{CFLZ} and Andrews \cite[Theorem 3]{A2009}:
\begin{equation}
\sum_{n=0}^{\infty}\Delta(n)(-q)^n
=\sum_{n=0}^{\infty}\frac{(-1)^nq^{n(n+1)/2}(q;q)_n}{(-q)_n}
\end{equation}
where \cite[Theorem 4]{A2009}
\begin{align}
\sum_{n=0}^{\infty}\Delta(n)q^n&=\sum_{n=0}^{\infty}q^{n(2n+1)}(1+q^{2n+1})\sum_{j=-n}^{n}q^{-j^2}\\
&=g_{1,2,2}(-q^2,-q^3,q^2)+qg_{1,2,2}(-q^4,-q^5,q^2),
\end{align}
where the last equality follows from a change of variables.

Simple sign flips relate the above functions to false theta functions.  We will prove
\begin{theorem} \label{theorem:signFlips1} We have
{\allowdisplaybreaks \begin{align}
g_{3,3,1}(q^3,q,q)
&=\frac{1}{2}\sum_{s=-\infty}^{\infty}\sg(s)(-1)^sq^{\binom{s+1}{2}},\\
g_{1,3,1}(-q,-q^2,-q)
&=\frac{1}{2}\sum_{r=-\infty}^{\infty}\sg(r)q^r(-q)^{\binom{r}{2}}.
\end{align}}%
\end{theorem}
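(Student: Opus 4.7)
The plan is to prove each identity by the same two-step recipe already used in Sections \ref{section:chanKim} and \ref{section:andrewsWarnaar}: first apply Proposition \ref{theorem:g-shift} with a small shift $(\ell,k)$ so that exactly one of the two one-sided theta sums survives, then apply Proposition \ref{proposition:g-flip} to recognize the leftover $g$-term as the negative of the original. This produces a self-equation of the form $g = -g + T$, whence $g = \tfrac{1}{2}T$ with $T$ exactly the false theta sum on the right-hand side.

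For $g_{3,3,1}(q^3,q,q)$ the plan is to take $(\ell,k)=(1,0)$ in Proposition \ref{theorem:g-shift}. Both the $k$-indexed single sum and the double correction are empty, leaving
\[
g_{3,3,1}(q^3,q,q) = -q^3\,g_{3,3,1}(q^6,q^4,q) + \sum_{s\in\mathbb{Z}}\sg(s)(-q)^s q^{\binom{s}{2}},
\]
and the exponent collapse $s+\binom{s}{2}=\binom{s+1}{2}$ puts the inner sum into the stated form. Proposition \ref{proposition:g-flip} applied to the same $g$-value gives $g_{3,3,1}(q^3,q,q) = (q^7/q^4)\,g_{3,3,1}(q^6,q^4,q) = q^3\,g_{3,3,1}(q^6,q^4,q)$, so the two relations combine into $2\,g_{3,3,1}(q^3,q,q) = \sum_{s}\sg(s)(-1)^s q^{\binom{s+1}{2}}$ as desired.

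For $g_{1,3,1}(-q,-q^2,-q)$ I would repeat the argument with base $-q$ and $(\ell,k)=(0,1)$. Since $-x = q$ and $-y = q^2$, the $\ell$-indexed sum and the double correction again vanish, the surviving theta sum is exactly $\sum_{r}\sg(r) q^r (-q)^{\binom{r}{2}}$, and the leftover $g$-term is $q^2\,g_{1,3,1}(q^4,q^3,-q)$. Proposition \ref{proposition:g-flip}, applied with $q$ replaced by $-q$, yields
\[
g_{1,3,1}(-q,-q^2,-q) = \frac{(-q)^5}{(-q)(-q^2)}\,g_{1,3,1}(q^4,q^3,-q) = -q^2\,g_{1,3,1}(q^4,q^3,-q),
\]
which collapses the shift identity to $2\,g_{1,3,1}(-q,-q^2,-q) = \sum_{r}\sg(r) q^r (-q)^{\binom{r}{2}}$.

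The only genuine obstacle is bookkeeping in the second identity, where the Hecke-Rogers base is $-q$ rather than $q$, so every occurrence of $q$ in Propositions \ref{theorem:g-shift} and \ref{proposition:g-flip} must be consistently replaced by $-q$. The shift $(\ell,k)=(0,1)$ is small enough that the only affected factor, $(-q)^{\binom{1}{2}} = 1$, contributes no extra sign; had a larger shift been needed, the tracking of signs in the quadratic exponents would become the delicate point.
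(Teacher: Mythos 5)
Your proof is correct and follows exactly the paper's argument: the same shifts $(\ell,k)=(1,0)$ and $(0,1)$ in Proposition \ref{theorem:g-shift}, combined with Proposition \ref{proposition:g-flip} to produce $g=-g+T$ and hence $g=\tfrac12 T$. (Your bookkeeping with base $-q$ even gives the shifted term as $q^{2}g_{1,3,1}(q^{4},q^{3},-q)$, which is the correct evaluation of $(-q)^{3}\cdot(-q)$ and $(-q)\cdot(-q^{2})$; the paper's displayed arguments $-q^{4},-q^{3}$ appear to carry a harmless sign typo, as the subsequent application of the flip identity agrees with yours.)
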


\begin{theorem}\label{theorem:ADH-flips} We have
{\allowdisplaybreaks \begin{align}
g_{1,3,3}(-q,q^2,q)+qg_{1,3,3}(-q^3,q^4,q)
&=\sum_{s=-\infty}^{\infty}\sg(s)(-1)^sq^{s(3s+1)/2},\label{equation:sigma-flip}\\
g_{1,2,2}(-q^{2},-q^{3},q^2)-qg_{1,2,2}(-q^4,-q^5,q^2)
&=-q\sum_{s\in\mathbb{Z}}\sg(s)q^{2s^2+3s}\label{equation:CFLZ-flip}.
\end{align}}%
\end{theorem}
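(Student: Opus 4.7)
The plan is to prove both identities by the same blueprint that was used throughout Sections \ref{section:chanKim} and \ref{section:andrewsWarnaar}: apply Proposition \ref{theorem:g-shift} with the minimal shift $(\ell,k)=(1,0)$ to one of the two $g$-functions appearing in the target combination, and then use Proposition \ref{proposition:g-flip} to collapse the resulting shifted $g$-function back onto the companion $g$-function. Since $k=0$ in both cases, the summation over $0\le s\le k-1$ and the double correction term in (\ref{equation:g-shift}) both drop out, leaving only a single residual $\sg$-sum over $s\in\Z$ coming from the $r=0$ contribution.

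For identity (\ref{equation:sigma-flip}), I would apply (\ref{equation:g-shift}) with $(a,b,c)=(1,3,3)$, $(x,y)=(-q,q^{2})$ and $(\ell,k)=(1,0)$. The $r=0$ contribution evaluates to $\sum_{s\in\Z}\sg(s)(-1)^{s}q^{2s+3\binom{s}{2}}=\sum_{s\in\Z}\sg(s)(-1)^{s}q^{s(3s+1)/2}$, so
\[
g_{1,3,3}(-q,q^{2},q)=q\,g_{1,3,3}(-q^{2},q^{5},q)+\sum_{s\in\Z}\sg(s)(-1)^{s}q^{s(3s+1)/2}.
\]
I would then apply (\ref{equation:g-flip}) to the shifted term: with $(a,b,c)=(1,3,3)$ the flip reads $g_{1,3,3}(x,y,q)=\tfrac{q^{7}}{xy}g_{1,3,3}(q^{5}/x,q^{9}/y,q)$, so $g_{1,3,3}(-q^{2},q^{5},q)=\tfrac{q^{7}}{-q^{7}}g_{1,3,3}(-q^{3},q^{4},q)=-g_{1,3,3}(-q^{3},q^{4},q)$. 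Substituting and moving the resulting $-q\,g_{1,3,3}(-q^{3},q^{4},q)$ to the left-hand side yields (\ref{equation:sigma-flip}).

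For identity (\ref{equation:CFLZ-flip}), I would apply (\ref{equation:g-shift}) with base $q^{2}$, $(a,b,c)=(1,2,2)$, $(x,y)=(-q^{4},-q^{5})$ and $(\ell,k)=(1,0)$. The $r=0$ residual is $\sum_{s\in\Z}\sg(s)q^{5s+4\binom{s}{2}}=\sum_{s\in\Z}\sg(s)q^{2s^{2}+3s}$, giving
\[
g_{1,2,2}(-q^{4},-q^{5},q^{2})=q^{4}g_{1,2,2}(-q^{6},-q^{9},q^{2})+\sum_{s\in\Z}\sg(s)q^{2s^{2}+3s}.
\]
The flip (\ref{equation:g-flip}) with base $q^{2}$ gives $g_{1,2,2}(-q^{6},-q^{9},q^{2})=q^{-5}g_{1,2,2}(-q^{2},-q^{3},q^{2})$, so the shifted term simplifies to $q^{-1}g_{1,2,2}(-q^{2},-q^{3},q^{2})$; multiplying through by $-q$ and rearranging produces (\ref{equation:CFLZ-flip}). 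The main point to watch is purely bookkeeping: one must track the base-$q^{2}$ exponents in both the shift and the flip carefully, and one must verify that the specific shift $(\ell,k)=(1,0)$ (rather than $(0,1)$) produces precisely the shifted $g$-function that the flip sends back to the companion $g$-function of the target combination. Once the parameter choices align, the remaining steps are routine $q$-algebra and the identities collapse immediately.
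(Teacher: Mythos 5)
Your proposal is correct and follows essentially the same route as the paper: the paper first records the $(\ell,k)$-general specializations of (\ref{equation:g-shift}) for $g_{1,3,3}(-q,q^2,q)$ and $g_{1,2,2}(-q^4,-q^5,q^2)$ in Proposition \ref{proposition:ADH-gform} and then sets $(\ell,k)=(1,0)$ and applies (\ref{equation:g-flip}), exactly as you do directly. Your residual sums $\sum_{s}\sg(s)(-1)^sq^{s(3s+1)/2}$ and $\sum_{s}\sg(s)q^{2s^2+3s}$ and the flip computations $g_{1,3,3}(-q^2,q^5,q)=-g_{1,3,3}(-q^3,q^4,q)$ and $g_{1,2,2}(-q^6,-q^9,q^2)=q^{-5}g_{1,2,2}(-q^2,-q^3,q^2)$ all check out.
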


\begin{proof}[Proof of Theorem \ref{theorem:signFlips1}]
Specializing (\ref{equation:g-shift}) with $(\ell,k)=(1,0)$, we have
\begin{align*}
g_{3,3,1}(q^3,q,q)&=-q^3g_{3,3,1}(q^6,q^4,q)+\sum_{s\in\mathbb{Z}}\sg(s)(-1)^sq^{\binom{s+1}{2}}\\
&=-g_{3,3,1}(q^3,q,q)+\sum_{s\in\mathbb{Z}}\sg(s)(-1)^sq^{\binom{s+1}{2}},
\end{align*}
where in the last line we used (\ref{equation:g-flip}).  The first identity then follows.  

Specializing (\ref{equation:g-shift}) with $(\ell,k)=(0,1)$ we have
\begin{align*}
g_{1,3,1}(-q,-q^2,-q)&=q^{2}g_{1,3,1}(-q^4,-q^3,-q)+\sum_{r=-\infty}^{\infty}\sg(r)q^r(-q)^{\binom{r}{2}}
\\
&=-g_{1,3,1}(-q,-q^2,-q)+\sum_{r=-\infty}^{\infty}\sg(r)q^r(-q)^{\binom{r}{2}},
\end{align*}
where for the last equality we used (\ref{equation:g-flip}), and the second identity follows.
\end{proof}

In order to prove Theorem \ref{theorem:ADH-flips}, we need the following proposition
\begin{proposition} \label{proposition:ADH-gform}We have
{\allowdisplaybreaks \begin{align}
g_{1,3,3}(-q,q^2,q)
&=(-1)^kq^{\ell}q^{2k}q^{\binom{\ell}{2}+3\ell k+ 3\binom{k}{2}}g_{1,3,3}(-q^{\ell+3k+1},q^{3\ell + 3k+2},q)
\label{equation:ADH-first}\\
&\ \    +\sum_{r=0}^{\ell-1}q^{\binom{r}{2}+r} \sum_{s=-\infty}^{\infty}\sg(s)(-1)^sq^{s(3s+1+6r)/2}\notag \\
& \  \ 
-\sum_{s=0}^{k-1}(-1)^sq^{3\binom{s}{2}+2s} \sum_{r=1}^{6s}q^{r(r-1-6s)/2} 
 -2\sum_{r=0}^{\ell-1}\sum_{s=0}^{k-1}(-1)^sq^{\binom{r}{2}+3rs+3\binom{s}{2}+r+2s}, \notag\\
 g_{1,2,2}(-q^4,-q^5,q^2)&=q^{4\ell}q^{5k}q^{2\binom{\ell}{2}+4\ell k+ 4\binom{k}{2}}g_{1,2,2}(-q^{2\ell+4k+4},-q^{4\ell + 4k+5},q^2)\label{equation:ADH-second} \\
& \ \  +\sum_{r=0}^{\ell-1}q^{r^2+3r}\sum_{s\in\mathbb{Z}}\sg(s)q^{s(2s+3+4r)}
  -\sum_{s=0}^{k-1}q^{2s^2+3s}\sum_{r=1}^{4s+2}q^{r(r-3-4s)}\notag  \\
&  \ \ -2\sum_{r=0}^{\ell-1}\sum_{s=0}^{k-1}q^{4s}q^{5s}q^{2\binom{r}{2}+4rs+4\binom{s}{2}}.\notag
\end{align}}%
\end{proposition}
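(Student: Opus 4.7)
The plan is to derive both identities as direct specializations of Proposition \ref{theorem:g-shift}.  For \eqref{equation:ADH-first}, I substitute $(a,b,c,x,y)=(1,3,3,-q,q^2)$ into \eqref{equation:g-shift}; for \eqref{equation:ADH-second}, I substitute $(a,b,c,x,y)=(1,2,2,-q^4,-q^5)$ and replace the base $q$ by $q^2$ throughout.  In both cases, the leading term $(-x)^\ell(-y)^k q^{a\binom{\ell}{2}+b\ell k+c\binom{k}{2}} g_{a,b,c}(q^{a\ell+bk}x,q^{b\ell+ck}y,q)$ and the final doubled double-sum transcribe directly after computing $(-x)^\ell$, $(-y)^k$, and the shifted arguments; the real work lies in massaging the two inner one-variable sums.

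For the $r$-indexed sum (over $0\le r\le\ell-1$), I consolidate the $s$-exponent: in \eqref{equation:ADH-first}, $2s+3rs+3\binom{s}{2}=s(3s+1+6r)/2$, and in \eqref{equation:ADH-second} (after doubling the exponents for the base $q^2$), $5s+4rs+4\binom{s}{2}=s(2s+3+4r)$.  The factor $(-1)^s$ coming from $(-y)^s$ survives in \eqref{equation:ADH-first} but not in \eqref{equation:ADH-second} (where $-y=q^5$).  The result appears as the second summand in each identity, a genuine $\sg(s)$-weighted series that I leave as is.

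The $s$-indexed sum is where the key cancellation occurs.  The inner $r$-sum reads $\sum_{r\in\Z}\sg(r)q^{E(r)}$ for an appropriate quadratic $E$, which splits as $\sum_{r=0}^\infty q^{E(r)}-\sum_{r=1}^\infty q^{E(-r)}$.  Unlike the Warnaar-type argument of \eqref{equation:revSum}, there is no alternating sign $(-1)^r$ available here (since $(-x)^r$ equals $q^r$ or $q^{4r}$ with no sign), so I cannot annihilate a finite central block by reflection.  Instead, I split the second series at the threshold where its exponent first becomes non-negative ($r=6s+1$ for \eqref{equation:ADH-first}, $r=4s+3$ for \eqref{equation:ADH-second}) and perform the index shift $r\mapsto r+6s+1$ (respectively $r\mapsto r+4s+3$) on the tail.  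A direct check shows the shifted tail coincides term-by-term with $\sum_{r=0}^\infty q^{E(r)}$, so the two infinite series cancel exactly, leaving only the finite residual $-\sum_{r=1}^{6s}q^{r(r-1-6s)/2}$, respectively $-\sum_{r=1}^{4s+2}q^{r(r-3-4s)}$, which is precisely the third summand in each identity.

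The main obstacle is purely bookkeeping: keeping track of signs arising from $(-x)^r$ and $(-y)^s$ when $x,y$ are themselves negative $q$-powers, and correctly doubling the exponent contributions $a\binom{r}{2}$, $brs$, $c\binom{s}{2}$ when the base is $q^2$ rather than $q$.  The underlying moves --- specialize \eqref{equation:g-shift}, consolidate exponents, and execute the index-shift cancellation of the infinite tail against the $r\ge 0$ series --- are otherwise routine, following the same template as the proofs of Propositions \ref{proposition:prop1} through \ref{proposition:prop6}.
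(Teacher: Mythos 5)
Your proposal is correct and follows essentially the same route as the paper: specialize Proposition \ref{theorem:g-shift}, keep the $r$-indexed term as a two-sided $\sg(s)$-weighted series, and reduce the $s$-indexed term by cancelling the $r\ge 0$ infinite series against the shifted tail of the $r\ge 1$ series, leaving only the finite residual $-\sum_{r=1}^{6s}q^{r(r-1-6s)/2}$ (resp.\ $-\sum_{r=1}^{4s+2}q^{r(r-3-4s)}$). The paper performs this cancellation implicitly in a single step and states that the second identity is ``similar,'' whereas you make the index shift $r\mapsto r+6s+1$ (resp.\ $r\mapsto r+4s+3$) explicit; the substance is identical.
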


\begin{proof}[Proof of Proposition \ref{proposition:ADH-gform}]
We prove the first identity.  The proof for the second identity is similar.  We compute
{\allowdisplaybreaks \begin{align*}
g_{1,3,3}&(-q,q^2,q)\\
&=q^{\ell}(-q^2)^kq^{\binom{\ell}{2}+3\ell k+ 3\binom{k}{2}}g_{1,3,3}(-q^{\ell+3k+1},q^{3\ell + 3k+2},q)\\
&\ \ \ \ \  +\sum_{r=0}^{\ell-1}q^{r}q^{\binom{r}{2}}\Big [ \sum_{s=0}^{\infty}(-1)^sq^{s(3s+1+6r)/2}
-\sum_{s=1}^{\infty}(-1)^sq^{s(3s-1-6r)/2}\Big ]\\
& \ \ \ \ \ 
+\sum_{s=0}^{k-1}(-1)^sq^{2s}q^{3\binom{s}{2}}\Big [ \sum_{r=0}^{\infty}q^{r(r+1+6s)/2}
-\sum_{r=1}^{\infty}q^{r(r-1-6s)/2} \Big ]  \\
& \ \ \ \ \ -2\sum_{r=0}^{\ell-1}\sum_{s=0}^{k-1}(-1)^sq^{r}q^{2s}q^{\binom{r}{2}+3rs+3\binom{s}{2}} \\
& =q^{\ell}(-q^2)^kq^{\binom{\ell}{2}+3\ell k+ 3\binom{k}{2}}g_{1,3,3}(-q^{\ell+3k+1},q^{3\ell + 3k+2},q)\\
&\ \ \ \ \  +\sum_{r=0}^{\ell-1}q^{r}q^{\binom{r}{2}}\Big [ \sum_{s=0}^{\infty}(-1)^sq^{s(3s+1+6r)/2}
-\sum_{s=1}^{\infty}(-1)^sq^{s(3s-1-6r)/2}\Big ]\\
& \ \ \ \ \ 
-\sum_{s=0}^{k-1}(-1)^sq^{2s}q^{3\binom{s}{2}} \sum_{r=1}^{6s}q^{r(r-1-6s)/2} 
 -2\sum_{r=0}^{\ell-1}\sum_{s=0}^{k-1}(-1)^sq^{r}q^{2s}q^{\binom{r}{2}+3rs+3\binom{s}{2}}. \qedhere
\end{align*}}%
\end{proof}

\begin{proof}[Proof of Theorem \ref{theorem:ADH-flips}]
To prove (\ref{equation:sigma-flip}), we specialize (\ref{equation:ADH-first}) with $(\ell,k)=(1,0)$.  We have
{\allowdisplaybreaks \begin{align*}
g_{1,3,3}(-q,q^2,q)&=qg_{1,3,3}(-q^2,q^5,q)
+\sum_{s=0}^{\infty}(-1)^sq^{s(3s+1)/2} -\sum_{s=1}^{\infty}(-1)^sq^{s(3s-1)/2}\\
&=-qg_{1,3,3}(-q^3,q^4)
+\sum_{s=0}^{\infty}(-1)^sq^{s(3s+1)/2} -\sum_{s=1}^{\infty}(-1)^sq^{s(3s-1)/2},
\end{align*}}
where we have used (\ref{equation:g-flip}).  So one can say
\begin{align}
g_{1,3,3}(-q,q^2,q)+qg_{1,3,3}(-q^3,q^4,q)
&=\sum_{s=0}^{\infty}(-1)^sq^{s(3s+1)/2} -\sum_{s=1}^{\infty}(-1)^sq^{s(3s-1)/2},
\end{align}
and the result follows.  To prove (\ref{equation:CFLZ-flip}), we specialize (\ref{equation:ADH-second}) with $ (\ell,k)=(1,0)$.  We find
{\allowdisplaybreaks \begin{align*}
g_{1,2,2}(-q^4,-q^5,q^2)&=q^{4}g_{1,2,2}(-q^{6},-q^{9},q^2)
 +\sum_{s\in\mathbb{Z}}\sg(s)q^{s(2s+3)}\\
 &=q^{-1}g_{1,2,2}(-q^{2},-q^{3},q^2)
 +\sum_{s\in\mathbb{Z}}\sg(s)q^{s(2s+3)},
\end{align*}}%
where we have used (\ref{equation:g-flip}).  Hence
\begin{equation}
g_{1,2,2}(-q^{2},-q^{3},q^2)-qg_{1,2,2}(-q^4,-q^5,q^2)
=-q\sum_{s\in\mathbb{Z}}\sg(s)q^{s(2s+3)},
\end{equation}
and the result follows.
\end{proof}

\section*{Acknowledgements}
This research was supported by Ministry of Science and Higher Education of the Russian Federation, agreement No. 075-15-2019-1619, and by the Theoretical Physics and Mathematics Advancement Foundation BASIS, agreement No. 20-7-1-25-1.


\begin{thebibliography}{999999}


\bibitem{ADH} G. E. Andrews, F. J. Dyson, D. R. Hickerson, {\em Partitions and indefinite quadratic forms}, Invent. Math. {\bf 91} (1988), no. 3, 391--407.

\bibitem{AH} G. E. Andrews, D. R. Hickerson, {\em Ramanujan's `lost' Notebook.  VII: The sixth order mock theta functions}, Adv. Math. {\bf 89} (1991) 60--105.

\bibitem{AW} G. E. Andrews, O. Warnaar {\em The Bailey transform and false theta functions}, Ramanujan J. {\bf 14} (2007), 173--188.

\bibitem{A2009} G. E. Andrews {\em Partitions with distinct evens}, Advances in Combinatorial Mathematics, Springer, Berlin (2009), 31-37.

\bibitem{CK} S. H. Chan, B. Kim {\em On some double-sum false theta series}, J. Number Theory {\bf 190} (2018), 40--55.

\bibitem{C} H. Cohen, {\em $q$-identities for Maass waveform}, Invent. Math. {\bf 91} (1988), 409--422.

\bibitem{CFLZ} D. Corson, D. Favero, K. Liesinger, S. Zubairy, {\em Characters and $q$-series in $\mathbb{Q}(\sqrt{2})$}, J. Number Theory {\bf 107} (2004), 392--405.

\bibitem{GM} B. Gordon, R. McIntosh, {\em Some eighth order mock theta functions,} J. London Math. Soc. (2) {\bf 62} (2000), pp. 321-335.

\bibitem{He} E. Hecke, {\em \"Uber einen Zusammenhang zwischen elliptischen Modulfunktionen und indefiniten quadratischen Formen}, Mathematische Werke, Vandenhoeck and Ruprecht, G\"ottingen, (1959), pp. 418-427.

\bibitem{H1} D. R. Hickerson, {\em A proof of the mock theta conjectures}, Invent. Math {\bf 94}, (1988), 639--660.

\bibitem{H2} D. R. Hickerson, {\em On the seventh order mock theta functions}, Invent. Math {\bf 94}, (1988), 661-677.

\bibitem{HM} D. R. Hickerson, E. T. Mortenson, {\em Hecke-type double sums, Appell--Lerch sums, and mock theta functions,~I},  Proc. London Math. Soc. (3) {\bf 109} (2014), no. 2, 382--422. 

\bibitem{P1} A. Polishchuk, {\em Indefinite theta series of signature (1,1) from the point of view of homological mirror symmetry}, Adv. Math., {\bf 196} (2005), no. 1, pp. 1-51.

\bibitem{R} L. J. Rogers, {\em Second memoir on the expansion of certain infinite products}, Proc. London Math. Soc., {\bf 25} (1894), pp. 318-343.

\bibitem{W1} S. O. Warnaar, {\em 50 years of Bailey's lemma}, A. Betten, et al. (Eds.), Algebraic Combinatorics and Applications, Springer, Berlin, 2001, 333--347.


\end{thebibliography}
\end{document}